\theoremstyle{plain}
\newtheorem{teo}{Theorem}[section]
\newtheorem{lem}[teo]{Lemma}
\theoremstyle{definition}
\newtheorem*{obs}{Remark}
\numberwithin{equation}{section}
\newcolumntype{C}{>{\centering\arraybackslash$}c<{$}}
\newcommand{\bbR}{\mathbb{R}}
\newcommand{\Xfim}{X^{(N)}_{F}}
\newcommand{\Vfim}{V^{(N)}_{F}}
\newcommand{\LW}[1]{\ensuremath{W_0(#1)}}
\newcommand{\chao}[1]{\lfloor{#1}\rfloor}
\newcommand{\teto}[1]{\lceil{#1}\rceil}
\newcommand{\parte}[1]{\textnormal{({#1})}}
\newcommand{\xf}{x_{\infty}}
\newcommand{\vf}{v_{\infty}}
\newcommand{\rf}{\rho}
\newcommand{\je}{j^{\ast}}
\newcommand{\chnx}{\chao{N x}}
\newcommand{\tenx}{\teto{N x}}
\newcommand{\tenu}{\teto{N u}}
\newcommand{\chnv}{\chao{N v}}
\newcommand{\tenv}{\teto{N v}}
\newcommand{\chnxf}{\chao{N \xf}}
\newcommand{\tenxf}{\teto{N \xf}}
\newcommand{\tenvf}{\teto{N \vf}}
\newcommand{\lpx}[1][x]{h({#1})}
\newcommand{\ldx}[1][x]{H({#1})}
\newcommand{\lpv}[1][v]{\psi({#1})}
\newcommand{\med}{\nu_N}
\newcommand{\ic}[1][K]{I({#1})}
\newcommand{\Gzu}{G \cap [0, 1)}
\newcommand{\Gui}{G \cap [1, \infty)}
\newcommand{\Gzx}{G \cap [0, \xf)}
\newcommand{\Gxv}{G \cap (\xf, \vf]}
\newcommand{\Gvu}{G \cap (\vf, 1)}
\newcommand{\cam}{\gamma}
\newcommand{\ccam}{\Gamma}
\newcommand{\fn}[1][j]{\phi^{(N)}({#1})}
\newcommand{\dn}[1][j]{\Delta^{(N)}({#1})}
\newcommand{\xn}[1][j]{\Xi^{(N)}({#1})}
\newcommand{\fmn}[1][z]{\Gamma^{(N)}({#1})}
\newcommand{\lan}[1][z]{\Lambda^{(N)}({#1})}
\newcommand{\lal}[1][z]{\Lambda({#1})}
\newcommand{\lopp}{\lambda^{(N)}(x)}
\newcommand{\elopp}{\tilde{\lambda}^{(N)}(x)}
\newcommand\stir[2]{\genfrac{\{}{\}}{0pt}{}{#1}{#2}}
\newcommand{\ogr}[1]{\mathcal{O}\left({#1}\right)}
\newcommand{\aut}{\mathcal{A}_n}
\newcommand{\tstru}{\mathcal{D}_n}
\newcommand{\al}{\alpha}
\newcommand{\be}{\beta}
\newcommand{\Ek}{\kappa}
\newcommand{\citparen}[1]{\citet{#1}}
\newcommand{\dpref}{ }
\newcommand{\vvirg}{, }
\begin{document}
\title{A large deviations principle for the \\
Maki--Thompson rumour model}
\author{Elcio Lebensztayn\thanks{The author is thankful to CNPq (303872/2012-8), FAPESP (2012/22673-4), and PRP-FAEPEX-UNICAMP (016/2013) for financial support.}}
\date{}

\maketitle

\begin{center}
\begin{small}
Institute of Mathematics, Statistics and Scientific Computation, \\
University of Campinas -- UNICAMP, \\
Rua S\'ergio Buarque de Holanda 651, CEP 13083-859, Campinas, SP, Brazil. \\
E-mail: \texttt{lebensztayn@ime.unicamp.br} \\[1cm]
\end{small}
\end{center}

\begin{abstract}
We consider the stochastic model for the propagation of a rumour within a population which was formulated by \citet{MT}.
\citet{Sudbury} established that, as the population size tends to infinity, the proportion of the population never hearing the rumour converges in probability to $0.2032$.
\citet{Watson} later derived the asymptotic normality of a suitably scaled version of this proportion.
We prove a corresponding large deviations principle, with an explicit formula for the rate function. \\[0.3cm]
\textbf{Keywords:} Large deviations, Markov process, Rumour models, Limit theorems, Final size distribution. \\[0.3cm]
\textbf{2010 Mathematics Subject Classification:} Primary: 60F10, 60J27; secondary: 60K30.
\end{abstract}

\baselineskip=18pt

\section{Introduction}
\label{S: Introduction}

Rumours play an important role in various aspects of the human life: in social relationships, politics, economy, diplomacy, marketing.
Two classical models of the mathematical literature for the spread of a rumour within a population were introduced by~\citet{DK} and~\citet{MT}.
In these models, a closed homogeneously mixing population of $N + 1$ individuals is considered.
The essential assumption is that an individual aware of the rumour will go on propagating it until the first time when he meets another person who also knows the rumour.
At this moment, this individual feels that there is no longer thrill in passing on the rumour.
Thus, the population is subdivided into three classes: \textit{ignorants} (those not aware of the rumour), \textit{spreaders} (who are spreading it), and \textit{stiflers} (who know the rumour but have ceased communicating it after meeting somebody who has already heard it).
The individuals interact in a random manner, in such a way that the evolution of the population is described by a continuous-time Markov chain.
The process eventually terminates (when there are no more spreaders in the population), so it is of interest to study the proportion of remaining ignorants.
We refer the interested reader to \citet[Chapter~5]{DG} for an excellent presentation on the mathematical modelling of rumours.

In the model formulated by~\citet{MT}, which was discussed later by~\citet{Frauenthal}, the rumour is propagated by directed contact of the spreaders with other individuals in the population.
The individuals change their states over time according to a simple set of rules, described as follows.
When a spreader meets an ignorant, the rumour is told and the ignorant becomes a spreader.
If a spreader contacts another spreader or a stifler, the initiating spreader turns into a stifler. 
We adopt the usual notation, denoting respectively by $X(t)$, $Y(t)$ and $Z(t)$ the number of ignorants, spreaders and stiflers at time~$t$.
Initially, $X(0) = N$, $Y(0) = 1$ and $Z(0) = 0$, while $X(t) + Y(t) + Z(t) = N + 1$ for all~$t \geq 0$.
The process $\{(X(t), Y(t))\}_{t \geq 0}$ is a continuous-time Markov chain which proceeds according to the following transition scheme:
\begin{alignat*}{2}
&(X(t), Y(t)) \to (X(t) - 1, Y(t) + 1) 
&&\quad\text{at rate } \, X(t) \, Y(t), \\[0.1cm]
&(X(t), Y(t)) \to (X(t), Y(t) - 1) 
&&\quad\text{at rate } \, Y(t) \left(N - X(t)\right).
\end{alignat*}
The first transition corresponds to a spreader telling the rumour to an ignorant, who becomes a spreader.
The second transition corresponds to a spreader meeting another spreader or a stifler, in which case he loses the interest in propagating the rumour and becomes a stifler.

Let $\tau = \inf \{t: Y(t) = 0 \}$ be the time until the rumour process ceases, thus $X(\tau)$ is the final number of ignorants in the population.
It is convenient to write down explicitly the dependence of this quantity on~$N$, so we denote it by $\Xfim$.
The first theorems established for the Maki--Thompson model deal with the asymptotic behaviour as $N \to \infty$ of $N^{-1} \, \Xfim$ (i.e.\ the proportion of the originally ignorant individuals who remained ignorant at the end of the process).
By using a martingale technique, \citet{Sudbury} proved that
\begin{equation*}
\lim_{N\to \infty} \frac{\Xfim}{N} = \xf \approx 0.2032 \quad \text{in probability.}
\end{equation*}
Therefore, for large $N$, approximately a fifth of the people are not aware of the rumour at the moment that the spreading process stops, with high probability.
The limiting proportion of ignorants $\xf$ can be expressed in terms of the so-called Lambert $W$ function.
This is the multivalued inverse of the function $ x \mapsto x \, e^x $; see~\citet{LW} for more details.
Denoting by $W_0$ the principal branch of the Lambert $W$ function, we have that
\begin{equation}
\label{F: xi}
\xf = - \dfrac{\LW{- 2 \, e^{-2}}}{2}.
\end{equation}
\citet{Watson} later proved the corresponding Central Limit Theorem, which states that 
\begin{equation*}
\sqrt{N} \biggl(\dfrac{\Xfim}{N} - \xf\biggr) \stackrel{\mathcal{D}}{\rightarrow} \mathcal{N}(0, \sigma^2) \, \text{ as } \, N \to \infty,
\end{equation*}
where $ \stackrel{\mathcal{D}}{\rightarrow} $ denotes convergence in distribution, and $ \mathcal{N}(0, \sigma^2) $ is the Gaussian distribution with mean zero and variance given by
\begin{equation}
\label{F: VN}
\sigma^2 = \frac{\xf (1 - \xf)}{1 - 2 \, \xf} \approx 0.2727.
\end{equation}
\citet{LP} studied a variant of the Maki--Thompson model whose dynamics includes different behaviours of the individuals in front of the rumour.
By using martingales, they characterized in terms of Gontcharoff polynomials the joint distribution of the number of individuals who ultimately heard the rumour and the total personal time units during which the rumour spread.
The limit theorems proved by~\citet{Sudbury} and \citet{Watson} were generalized by \citet{RPRS} for a Maki--Thompson rumour model with general initial configuration and in which a spreader becomes a stifler only after being involved in a random number of unsuccessful telling meetings.
In~\citet{LTRM}, these limit theorems are also established for a general stochastic rumour model defined in terms of parameters that determine the rates at which the different interactions between individuals occur.
This definition allows a quantitative formulation of various behavioural mechanisms of the people involved in the dissemination of the rumour.

The main results of the paper are stated in Section~\ref{S: Main results}.
Our main theorem is that a full large deviations principle holds for the proportion $N^{-1} \, \Xfim$.
To prove this result, we first derive a closed formula for the probability mass function of $\Xfim$, revealing the structure of the distribution of this random variable in an enlightening way.
Then, we obtain the asymptotic behaviour of normalized logarithms of probabilities of certain events.
To the best of our knowledge, large deviations for the final outcome of stochastic rumour models on finite populations have never been investigated in the literature.
Regarding epidemic-like processes on infinite graphs, the main object under study is the growing set of infected individuals, and large deviations results can be found in~\citet{KRS} and references therein.

\section{Main results}
\label{S: Main results}

In our first result, we give a closed formula for the absorption probabilities of the Maki--Thompson model.
These probabilities are expressed in terms of an integer sequence~$\{ d_n \}_{n \geq 1}$ whose $n$-th term is equal to the number of certain deterministic finite automata.
An \textit{automaton} is a mathematical abstraction of a machine that performs computations on an input by moving through a series of states, and, as the result, it decides whether that input is accepted or rejected.
A complete exposition of the theory of finite automata can be found in \citet{HMU}.

For $n \geq 1$, let $d_n$ denote the number of nonisomorphic unlabelled initially connected complete and deterministic automata with $n$ states over a $2$-letter alphabet.
See \citet{BN} for more details on the formal definition of this class of automata, and \citet{DKS} for a survey about the problem of enumeration of different classes of automata.
\citet{Liskovets} proved that the sequence $\{ d_n \}_{n \geq 1}$ can be obtained recursively by
\begin{equation}
\label{F: Sequence}
\begin{aligned}
d_1 &= 1, \\[0.1cm]
d_n &= \frac{n^{2 n}}{(n - 1)!} - \sum_{i = 1}^{n - 1} \frac{n^{2 (n - i)}}{(n - i)!} \, d_i \quad \text{for } n \geq 2.
\end{aligned}
\end{equation}
Using a computer, one calculates its first elements:
\[ 1, \, 12, \, 216, \, 5248, \, 160675, \, 5931540, \, 256182290, \, \dots \]
This sequence appears as A006689 in \citet{Sloane}.
We will use in the sequel that $\{ d_n \}_{n \geq 1}$ can be expressed by the recursive formula~\eqref{F: Sequence}, as well as an asymptotic approximation and an upper bound for~$d_n$ derived from \citep{BN}.

\begin{teo}
\label{T: PAX}
For each $i = 0, \dots, N - 1$,
\[ P(\Xfim = i) = \frac{(N - 1)!}{i!} \frac{d_{N - i}}{N^{2 (N - i)}}. \]
\end{teo}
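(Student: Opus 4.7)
The plan is to work with the embedded jump chain of the Markov process $\{(X(t), Y(t))\}_{t \geq 0}$. At any state $(X, Y)$ with $Y \geq 1$, the two competing jump rates sum to $XY + Y(N - X) = NY$, so the next jump moves to $(X - 1, Y + 1)$ with probability $X/N$ (informing an ignorant) and to $(X, Y - 1)$ with probability $(N - X)/N$ (stifling a spreader). Consequently, the event $\{\Xfim = i\}$ decomposes as the disjoint union over trajectories of the jump chain from $(N, 1)$ to $(i, 0)$ consisting of exactly $n := N - i$ ``ignorant-to-spreader'' jumps and $n + 1$ ``spreader-to-stifler'' jumps, in which the spreader count $Y$ remains strictly positive until the final step.

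Encoding each such trajectory as a sequence of $n$ plus and $n + 1$ minus symbols whose partial sums starting from $1$ stay positive before the last step, one sees that $X$ equals $N - j + 1$ just before the $j$-th plus-step, so the plus steps together contribute $\prod_{j = 1}^{n} (N - j + 1)/N = N! / (i! \, N^n)$. Just before the $r$-th minus-step, $X = N - k_r$, where $k_r$ counts the plus-symbols preceding it, so the minus steps contribute $\prod_{r = 1}^{n + 1} k_r / N$. Summing over all valid trajectories yields
\[ P(\Xfim = i) = \frac{N!}{i! \, N^{2 n + 1}} \, S_n, \qquad S_n := \sum_{\textnormal{valid paths}} \, \prod_{r = 1}^{n + 1} k_r, \]
and crucially $S_n$ depends only on $n$, not on $N$.

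To conclude, I show that $S_n = d_n$ by induction. The base case is immediate: for $n = 1$ the unique valid path is $(+, -, -)$, yielding $S_1 = 1 = d_1$. For the inductive step, since the chain is almost surely absorbed at $Y = 0$ in finite time, the identity $\sum_{i = 0}^{N - 1} P(\Xfim = i) = 1$ holds automatically; substituting the formula above, reindexing via $n = N - i$, and multiplying by $N^{2 N + 1}/N!$, this sum-to-one condition rearranges to
\[ S_N = \frac{N^{2 N}}{(N - 1)!} - \sum_{n = 1}^{N - 1} \frac{N^{2 (N - n)}}{(N - n)!} \, S_n, \]
which is precisely the Liskovets recursion~\eqref{F: Sequence}. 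Hence $S_n$ and $d_n$ satisfy the same recursion with the same initial value, so they coincide, and substituting back $n = N - i$ delivers the stated formula. The main technical obstacle I anticipate lies in the second step: carefully tracking the value of $X$ at each down-step and enforcing the positivity constraint on $Y$ throughout the trajectory, so as to confirm that the weighted path sum really is the universal quantity $S_n$ rather than an $N$-dependent expression. Once this bookkeeping is clean, the identification $S_n = d_n$ via Liskovets's recursion is essentially automatic.
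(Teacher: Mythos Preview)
Your proof is correct and follows essentially the same route as the paper's: both analyze the embedded jump chain, factor each trajectory's probability into an $N$-dependent falling-factorial piece from the ``plus'' steps and an $N$-independent weighted path sum from the ``minus'' steps, and then identify that sum with $d_n$ by feeding the sum-to-one identity into Liskovets's recursion~\eqref{F: Sequence}. The only cosmetic difference is that the paper first changes variables to $V = N - X$ and phrases paths as lattice walks with step set $\{(0,-1),(1,1)\}$, writing the combinatorial sum as $b_j = \sum_\gamma \prod_i i^{m_i(\gamma)}$ rather than your $S_n = \sum \prod_r k_r$, but these are the same object.
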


\noindent
We observe that the distribution of $\Xfim$ can be obtained from the results presented in~\citet{LP}; for more details, see the comments after the proof of Proposition~2 in this paper.
We provide a direct proof of Theorem~\ref{T: PAX}, which relies only on the analysis of the embedded chain of the process.

We need some definitions to state the full large deviations principle for the ultimate proportion of ignorants.
We define the constants
\begin{align*}
\vf &= 1 - \xf \approx 0.7968, \quad \text{and} \\[0.1cm]
\rf &= 2 + \log \xf + \log (1 - \xf) \approx 0.1792,
\end{align*}
where $\xf$ is given in~\eqref{F: xi}.
We also define the function $h: [0, 1) \rightarrow \bbR$ given by
\[ \lpx = x \log x + (1 - x) [\rf - \log (1 - x)], \]
with the usual convention that $0 \log 0 = 0$, and let $H: [0, \infty) \rightarrow [0, \infty]$ be given by
\[ \ldx =
\left\{
\begin{array}{cl}
\lpx   & \text{if } 0 \leq x < 1, \\[0.1cm]
\infty & \text{if } x \geq 1.
\end{array}	\right. \]
We note that
\begin{itemize}
\item[\parte{i}] $\lpx[\xf] = 0$.
\item[\parte{ii}] $h$ is decreasing on the intervals $[0, \xf]$ and $[\vf, 1)$, and is increasing on $[\xf, \vf]$.
\item[\parte{iii}] $h$ is strictly convex on $[0, 1/2]$, and is strictly concave on $[1/2, 1)$.
\end{itemize}
The graph of $h$ is presented in Figure~\ref{Fig: Graf h}.

\begin{figure}[!htbp]
\centering
\includegraphics[scale=0.9]{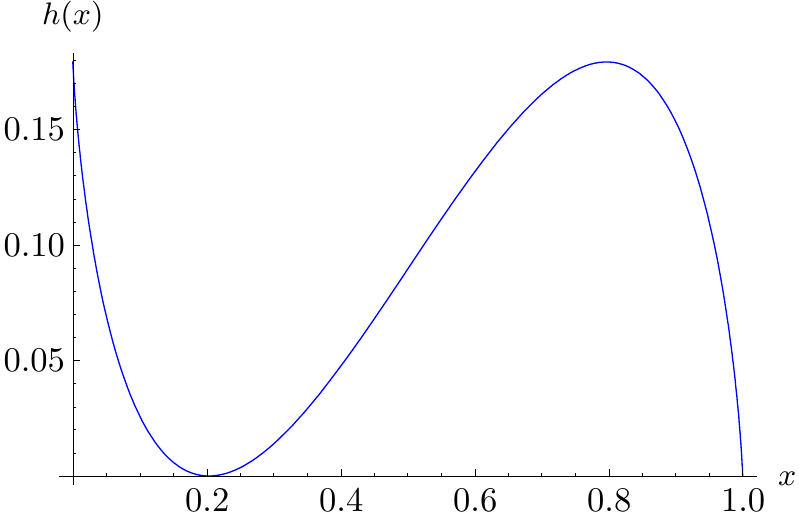}
\caption{Graph of the function $h$.}
\label{Fig: Graf h}
\end{figure}

\begin{teo}
\label{T: PGD}
Let $\med$ be the probability distribution of the random variable $N^{-1} \, \Xfim$ on~$[0, \infty)$.
Then the following conclusions hold.
\begin{itemize}
\item[\textnormal{(a)}] For each closed set $F \subset [0, \infty)$,
\begin{equation*}
\limsup_{N \to \infty} \frac{1}{N} \, \log \med(F) \leq - \inf_{x \in F} \ldx.
\end{equation*}
\item[\textnormal{(b)}] For each open set $G \subset [0, \infty)$,
\begin{equation*}
\liminf_{N \to \infty} \frac{1}{N} \, \log \med(G) \geq - \inf_{x \in G} \ldx.
\end{equation*}
\end{itemize}
\end{teo}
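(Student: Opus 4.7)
The plan is to deduce the LDP from the exact formula of Theorem~\ref{T: PAX} together with Stirling's formula and the sharp asymptotic for $\{d_n\}$ from~\citep{BN}. Setting $i = \chnx$ and $n = N - i$, taking logarithms in $P(\Xfim = i) = \frac{(N-1)!}{i!} \frac{d_n}{N^{2n}}$ and applying Stirling to the factorials reduces the expression, after division by $N$, to
\[
\frac{1}{N} \log P(\Xfim = i) = -(1-x) \log N - (1-x) - x \log x + \frac{1}{N} \log d_n + o(1).
\]
The divergent $(1-x) \log N$ term must be absorbed by the leading behaviour of $\log d_n$: the \citep{BN} enumeration result yields $\log d_n = n \log n + n(1 - \rf) + o(n)$, with the exponential rate $2 - \rf = -\log[\xf(1-\xf)]$ matching exactly the constant defined via the Lambert-$W$ identity~\eqref{F: xi}. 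Substituting this back produces $N^{-1} \log P(\Xfim = \chnx) \to -\lpx$ for every $x \in [0, 1)$, and inspection of the error terms shows that this convergence is uniform on compact subsets of $[0, 1)$.

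For part~(a), I first observe that $\Xfim \in \{0, 1, \dots, N-1\}$, so $\med$ is supported on $[0, 1)$ and $\med(F) = \med(F \cap [0, 1))$; contributions from $[1, \infty)$ are vacuous and match the convention $\ldx \equiv \infty$ there. When $\inf_F \ldx = 0$, the trivial bound $\med(F) \leq 1$ already yields $\limsup N^{-1} \log \med(F) \leq 0$. When $\inf_F \ldx > 0$, the strict positivity of $\lpx$ on $F \cap [0, 1)$ (together with $\lpx \to 0$ as $x \to 1^-$) forces $F \cap [0, 1) \subset [0, 1 - \delta]$ for some $\delta > 0$, so the uniform convergence from the first step applies on this compact range. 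Combining it with the union bound $\med(F) \leq N \cdot \max\{P(\Xfim = i) : i/N \in F\}$ and $N^{-1} \log N \to 0$ produces $\limsup N^{-1} \log \med(F) \leq -\inf_F \lpx$.

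For part~(b), given an open $G$ I pick $x_0 \in G \cap [0, 1)$ with $\lpx[x_0]$ within $\epsilon$ of $\inf_G \ldx$ (the case $\inf_G \ldx = \infty$ being trivial). For $N$ sufficiently large $\lfloor N x_0 \rfloor / N \in G$, whence $\med(G) \geq P(\Xfim = \lfloor N x_0 \rfloor)$, and the pointwise convergence from the first step gives $\liminf N^{-1} \log \med(G) \geq -\lpx[x_0]$; letting $\epsilon \to 0$ completes the argument. The main obstacle is the first step: extracting from the automata asymptotic of~\citep{BN} a precise exponential rate $\log d_n = n \log n + n(1 - \rf) + o(n)$ that cancels the Stirling $\log N$ contributions exactly and identifies the rate function as $\lpx$. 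Once this combinatorial--analytic identity is confirmed, the remainder reduces to standard large-deviations book-keeping.
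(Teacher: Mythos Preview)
Your argument is essentially correct, and it takes a genuinely different route from the paper's. The paper first establishes the pointwise limit (Theorem~\ref{TA: CAX}) and then proves interval-level asymptotics (Theorem~\ref{TA: GDX}) via Lemmas~\ref{L: LSA} and~\ref{L: Est}, which require a careful monotonicity analysis of the function $j\mapsto\fn$ on each of the three sub-intervals determined by $\xf$ and $\vf$; the LDP is then deduced by decomposing $F$ according to those same monotonicity regions of $h$. You bypass all of this: once the pointwise asymptotic $N^{-1}\log P(\Xfim=\chnx)\to -\lpx$ is upgraded to uniform convergence on compact subsets of $[0,1)$, the upper bound follows from the crude union bound $\med(F)\le N\max_i P(\Xfim=i)$, and the lower bound from a single-atom estimate. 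The key observation that $\lpx\to 0$ as $x\to 1^-$ forces $F\cap[0,1)$ into a compact when $\inf_F H>0$ is exactly what makes the uniform estimate applicable, and is not used in the paper's proof.

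Your approach is shorter and more in the spirit of Cram\'er-type arguments; what it gives up is Theorem~\ref{TA: GDX} itself, which the paper obtains as an intermediate result of independent interest. One point you should make explicit when filling in the details: the uniformity claim rests on the fact that the $o(n)$ error in $\log d_n=n\log n+n(1-\rf)+o(n)$ (equivalently, the $O(1/n)$ in Lemma~\ref{L: NS} combined with Lemma~\ref{L: Aut}) is a function of $n$ alone, so for $x\in[0,1-\delta]$ one has $n=N-\chnx\ge N\delta-1\to\infty$ uniformly. The paper effectively extracts this same uniformity through the explicit inequality of Lemma~\ref{L: LSA}; your formulation via uniform convergence on compacts is equivalent but cleaner.
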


To finish the section, we present two results which concern the asymptotic behaviour of normalized logarithms of probabilities of certain events.
These results are useful in the proof of Theorem~\ref{T: PGD}.

\begin{teo}
\label{TA: CAX}
For every $x \in [0, 1)$, we have that
\[ \lim_{N \to \infty} -\frac{1}{N} \, \log P(\Xfim = \chnx) = 
\lim_{N \to \infty} -\frac{1}{N} \, \log P(\Xfim = \tenx) = \lpx. \]
\end{teo}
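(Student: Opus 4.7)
The plan is to start from the exact expression for $P(\Xfim = i)$ given in Theorem~\ref{T: PAX} and extract the exponential rate via Stirling's formula together with the asymptotic for the integer sequence $\{d_n\}$ that the paper has announced it will draw from \citet{BN}. Since $|\chnx - \tenx| \leq 1$, adjacent values of $\log i!$ and of $\log d_{N-i}$ differ by $O(\log N) = o(N)$, so both choices of $i$ produce the same limit; it therefore suffices to treat $i = \chnx$.

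Taking logarithms in Theorem~\ref{T: PAX} gives
\[ -\frac{1}{N} \log P(\Xfim = i) = -\frac{\log(N-1)!}{N} + \frac{\log i!}{N} - \frac{\log d_{N-i}}{N} + \frac{2(N-i)}{N} \log N. \]
Stirling yields $\log(N-1)!/N = \log N - 1 + o(1)$ and, for $x \in (0,1)$, $\log i!/N = x \log N + x \log x - x + o(1)$; the boundary $x = 0$ is accommodated by the convention $0 \log 0 = 0$ since then $i \in \{0, 1\}$ and $\log i! = 0$. For $d_n$ I would invoke the asymptotic approximation from \citet{BN}, which at the required logarithmic precision amounts to
\[ \log d_n = n \log n + (1 - \rf)\, n + o(n). \]
Substituting $n = N - i$ with $n/N \to 1-x$ gives $\log d_{N-i}/N = (1-x)\log N + (1-x)\log(1-x) + (1-\rf)(1-x) + o(1)$.

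Assembling the four pieces, the coefficient of $\log N$ equals $-1 + x - (1-x) + 2(1-x) = 0$, so the logarithmically divergent terms cancel. Collecting what remains and using $-(1-\rf)(1-x) = -(1-x) + \rf(1-x)$, the constants reduce to $x \log x + (1-x)[\rf - \log(1-x)] = \lpx$, the desired limit. For the boundary point $x = 0$, the formula reduces directly to an estimate of $\log d_N$ alone and produces $\lpx[0] = \rf$, consistent with the rest.

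The main obstacle is obtaining the logarithmic asymptotic of $d_n$ at $o(n)$ accuracy: a weaker one-sided estimate would only yield the matching one-sided bound on the limit. In principle, the recursion~\eqref{F: Sequence} combined with the explicit upper and lower bounds of \citet{BN} should deliver the needed two-sided estimate, but extracting the precise exponential constant $1-\rf$ is the nontrivial step on which the whole rate function hinges. Once that ingredient is in place, the remainder of the argument is a clean algebraic collapse and a separate treatment of the trivial case $i = 0$.
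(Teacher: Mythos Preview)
Your approach is correct and essentially the same as the paper's: both start from the exact mass function of Theorem~\ref{T: PAX}, apply Stirling's formula, and feed in the asymptotic for $d_n$ drawn from \citet{BN}. The paper reformulates in terms of $\Vfim = N - \Xfim$ and actually proves the sharper Lemma~\ref{L: Asymp}; your ``main obstacle'' is resolved there via Lemmas~\ref{L: NS} and~\ref{L: Aut}, which together give $d_n \sim \Ek\,\al\,\be^{\,n} n^{n+1/2}$ with $\log\be = 1-\rf$, exactly the logarithmic estimate you need.
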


\begin{teo}
\label{TA: GDX}
\begin{itemize}
\item[\parte{a}] If $0 \leq x < \xf$, then
\begin{equation*}
\lim_{N \to \infty} -\frac{1}{N} \, \log P(\Xfim \leq N x) = \lpx.
\end{equation*}
\item[\parte{b}] If $\xf < x < y \leq \vf$, then
\begin{equation*}
\lim_{N \to \infty} -\frac{1}{N} \, \log P(N x \leq \Xfim \leq N y) = \lpx.
\end{equation*}
\item[\parte{c}] If $\vf \leq x < y < 1$, then
\begin{equation*}
\lim_{N \to \infty} -\frac{1}{N} \, \log P(N x \leq \Xfim \leq N y) = \lpx[y].
\end{equation*}
\end{itemize}
\end{teo}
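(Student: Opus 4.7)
My plan is to prove matching upper and lower bounds on $-\frac{1}{N}\log$ of each probability in question. The easy direction is the upper bound on $-\frac{1}{N}\log$ (equivalently, a lower bound on the probability itself), which is immediate from Theorem~\ref{TA: CAX} by restricting each event to a single well-chosen point. In part~\parte{a}, $\chnx \le N x$ gives $P(\Xfim \le N x) \ge P(\Xfim = \chnx)$, so Theorem~\ref{TA: CAX} yields $\limsup_N -\frac{1}{N}\log P(\Xfim \le N x) \le \lpx$. In part~\parte{b}, for $N$ large enough we have $N x \le \tenx \le N y$, so the analogous single-term bound with $i = \tenx$ gives $\limsup \le \lpx$. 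In part~\parte{c}, taking $i = \chao{N y}$ (which lies in $[N x, N y]$ for $N$ large) yields $\limsup \le \lpx[y]$.

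For the matching lower bounds on $-\frac{1}{N}\log$, I need to control sums over many values of $i$, and this is where the monotonicity properties~\parte{ii} of $h$ come in. The key auxiliary input, which I expect to extract from the proof of Theorem~\ref{TA: CAX}, is a uniform estimate of the form
\[ P(\Xfim = i) \le A_N \, \exp(-N \, h(i/N)) \quad \text{valid for all } 0 \le i \le N - 1, \]
with $N^{-1}\log A_N \to 0$. Such an estimate should follow from the closed formula in Theorem~\ref{T: PAX}, Stirling's approximation, and the asymptotic equivalence and upper bound for $d_n$ imported from~\citet{BN}. With it in hand, each sum $P(\Xfim \in J)$ is bounded above by (number of indices in $J$) $\times A_N \max_{i \in J} \exp(-N h(i/N))$, and the monotonicity of $h$ locates the maximum at the proper endpoint of $J$: at $i = \chnx$ in part~\parte{a} (since $h$ is decreasing on $[0, \xf]$), at $i = \tenx$ in part~\parte{b} (since $h$ is increasing on $[\xf, \vf]$), and at $i = \chao{N y}$ in part~\parte{c} (since $h$ is decreasing on $[\vf, 1)$). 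Dividing by $-N$, sending $N \to \infty$, and using continuity of $h$ gives the matching $\liminf$ bounds.

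The main obstacle is precisely the uniform estimate, since the pointwise statement of Theorem~\ref{TA: CAX} by itself does not control the ratios $P(\Xfim = i)/\exp(-N h(i/N))$ uniformly in~$i$. Fortunately, in each of the three parts the indices we need to consider satisfy $i/N \le C$ for a constant $C < 1$ depending on the interval (one may take $C = \xf$, $\vf$, and $y$ in~\parte{a}, \parte{b}, \parte{c} respectively), so $n = N - i$ grows linearly with $N$ throughout. In this regime the BN asymptotic for $d_n$ applies uniformly to the factor $d_{N-i}/N^{2(N-i)}$ in Theorem~\ref{T: PAX}, and combining it with Stirling applied to $(N-1)!/i!$ produces, after algebraic simplification, exactly the exponential factor $\exp(-N h(i/N))$ together with a polynomial-in-$N$ correction uniform in the admissible $i$. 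Once this is established, the monotonicity-based summation argument outlined above closes the proof of all three statements.
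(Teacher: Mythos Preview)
Your proposal is correct and follows essentially the same strategy as the paper: the $\limsup$ bound comes from a single-term lower bound via Theorem~\ref{TA: CAX}, and the $\liminf$ bound comes from a uniform pointwise upper bound on $P(\Xfim=i)$ (obtained from Theorem~\ref{T: PAX}, Stirling, and the Bassino--Nicaud inequality $d_n\le n\stir{2n}{n}$ together with the asymptotic for $\stir{2n}{n}$), summed with the aid of monotonicity. The paper carries this out after the change of variable $\Vfim=N-\Xfim$ (Lemmas~\ref{L: LSA} and~\ref{L: Est}), but the content is the same.

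There is one genuine, if minor, difference worth noting. The paper bounds $P(\Vfim=j)$ by $C_0\,\fn(1+C_1/(Nu))$ with $\fn=f(j/N)[g(j/N)]^N$, and then proves that the full quantity $\fn$---including the sub-exponential factor $f(j/N)=\sqrt{(j/N)/(1-j/N)}$---is monotone on the relevant index ranges (Lemma~\ref{L: Est}); this requires analysing $\partial\fn/\partial j$ and invoking Dini's theorem. Your route instead absorbs $f(j/N)$ into the polynomial prefactor $A_N$ (legitimate, since $f(j/N)\le\sqrt{N}$ for $1\le j\le N-1$) and then uses only the monotonicity of $h$ itself on $[0,\xf]$, $[\xf,\vf]$, $[\vf,1)$, which is already recorded as property~\parte{ii}. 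This is a modest simplification: you trade a slightly cruder bound (losing a $\sqrt{N}$ factor that is harmless after taking $\frac{1}{N}\log$) for not having to prove Lemma~\ref{L: Est}. Both approaches yield the same limits.
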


\begin{obs}
Let us make a few comments about the different flavours of the limit theorems obtained for the final proportion of ignorants in Maki--Thompson model.
The Law of Large Numbers proved by \citet{Sudbury} states that, for every $x > 0$,
\begin{equation}
\label{F: LLN}
P\bigl(|\Xfim - N \xf| \geq N x\bigr) \to 0 \, \text{ as } \, N \to \infty.
\end{equation}
Therefore, the typical value of $\Xfim$ is $N \xf$.
On the other hand, the Central Limit Theorem established by \citet{Watson} asserts that, for every $x \in \bbR$,
\[ P\bigl(\Xfim - N \xf \geq x \sqrt{N}\bigr) \to 1 - \Phi(x / \sigma) \, \text{ as } \, N \to \infty, \]
where $\Phi$ is the distribution function of the standard normal distribution and $\sigma^2$ is given by~\eqref{F: VN}.
This implies that the deviations of $\Xfim$ from $N \xf$ are typically of the order~$\sqrt{N}$.
Consequently, \textit{large deviations} (of order $N$) have probabilities which tend to $0$ as $N \to \infty$ (that is, \eqref{F: LLN} holds).
The Large Deviations Theorem quantifies precisely the exponential decay rate at which this occurs, so it is a useful tool when an approximation of these small probabilities is needed.
We refer to \citet{DZ} for a detailed presentation of an historical overview and the fundamental results in this subject.

For a numerical illustration, we consider deviations from below of $\Xfim$ from the typical value.
According to Theorem~\ref{TA: GDX}, for $x \in [0, \xf)$, we have that $P(\Xfim \leq N x)$ decays exponentially in the manner of $e^{- N \, \lpx}$.
For $N \in \{1700, 1800, 1900, 2000 \}$ and $x$ taking on some values in the interval~$(0, \xf)$, we computed the normalized logarithms
\begin{equation}
\label{F: LP}
\lopp = -\frac{1}{N} \, \log P(\Xfim \leq N x),
\end{equation}
by using the exact distribution of $\Xfim$, the corresponding values $\elopp$ obtained by applying the Central Limit Theorem to estimate the probability in~\eqref{F: LP}, and the numerical values of $\lpx$.
The resulting values are shown in Table~\ref{Tab: TL}.
Notice that, for values of~$x$ that are farther from~$\xf$, better approximations for $P(\Xfim \leq N x)$ are available from the large deviations result, whereas the estimates given by the Central Limit Theorem lose accuracy.
\end{obs}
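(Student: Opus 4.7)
The Remark contains no new theorem --- its content is either a citation of previously published results or a numerical tabulation derived from theorems already stated --- so in place of a proof I would describe how each assertion is supported. The Law of Large Numbers~\eqref{F: LLN} and the Central Limit Theorem are quoted verbatim from \citet{Sudbury} and \citet{Watson}, while Theorem~\ref{TA: GDX} is invoked for the exponential decay rate. The only implication that must be argued, rather than cited, is the sentence asserting that deviations of order $N$ have vanishing probability; this is a one-line consequence of~\eqref{F: LLN}, since for any $x > 0$ both tail events $\{\Xfim - N\xf \geq Nx\}$ and $\{N\xf - \Xfim \geq Nx\}$ are contained in $\{|\Xfim - N\xf| \geq Nx\}$.

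For the table itself, I would record the exact numerical recipe used to produce the three columns. The true value $\lopp$ defined in~\eqref{F: LP} is computed by summing the closed-form probabilities of Theorem~\ref{T: PAX} over $i = 0, 1, \dots, \chnx$ and then taking $-N^{-1} \log$ of the partial sum; the Central Limit Theorem approximation $\elopp$ replaces that sum by $\Phi((\chnx - N \xf)/(\sigma \sqrt{N}))$ with $\sigma$ from~\eqref{F: VN}, and again takes $-N^{-1} \log$; the rate $\lpx$ is evaluated directly from its defining formula using the numerical values of $\xf$ and $\rf$ given at the start of Section~\ref{S: Main results}. Because the expression of Theorem~\ref{T: PAX} contains factorials and the factor $N^{2(N-i)}$, I would flag the need for high-precision arithmetic when $N$ is in the thousands; conventional floating-point evaluation underflows before the partial sum is accumulated, and I would likely work with $\log P(\Xfim = i)$ and a log-sum-exp step.

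The qualitative comment at the close of the Remark is best supported by a brief Taylor expansion of $h$ at $\xf$. From the defining formula one computes $h''(x) = 1/x - 1/(1-x) = (1 - 2x)/(x(1-x))$, hence $h''(\xf) = (1 - 2\xf)/(\xf(1 - \xf)) = 1/\sigma^2$ by direct comparison with~\eqref{F: VN}. Combined with the noted facts $h(\xf) = 0$ and $h'(\xf) = 0$, this shows that near $\xf$ the rate function agrees to second order with $(x - \xf)^2/(2 \sigma^2)$, which is precisely the leading exponent of the Gaussian tail produced by the Central Limit Theorem. Hence $\lopp$ and $\elopp$ are indistinguishable to leading order near $\xf$, but they diverge at higher orders in $x - \xf$; this is the mechanism responsible for the degradation of the CLT-based approximation as $x$ moves away from the typical value, and is the only part of the Remark where a short calculation, as opposed to a citation, is actually being done.
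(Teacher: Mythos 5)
Your reading is correct: this is a Remark, not a theorem, and the paper offers no proof of it — every quantitative claim is either a citation (Sudbury, Watson), an invocation of Theorem~\ref{TA: GDX}, or a numerical computation, exactly as you say. The one place where you go beyond the paper is the closing paragraph, and it is a genuine improvement: the computation $h''(x) = (1-2x)/(x(1-x))$, hence $h''(\xf) = (1-2\xf)/(\xf(1-\xf)) = 1/\sigma^2$ by comparison with~\eqref{F: VN}, together with $\lpx[\xf] = 0$ and $h'(\xf) = 0$, shows that $\lpx$ agrees with the Gaussian exponent $(x-\xf)^2/(2\sigma^2)$ only to second order at $\xf$. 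This correctly explains the pattern in Table~\ref{Tab: TL} (agreement of $\lopp$ and $\elopp$ near $x = 0.2$, divergence at $x = 0.05$), whereas the paper merely asserts the phenomenon. Your numerical recipe for the table (partial sums of the exact mass function of Theorem~\ref{T: PAX} in log space, versus the normal approximation) is a reasonable reconstruction of what the author must have done, and the caution about underflow is well taken. No gaps.
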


\begin{table}[htbp]
\centering
\begin{tabular}{CCCCCCCCC}
\toprule
 & \multicolumn{2}{C}{x = 0.05} & \multicolumn{2}{C}{x = 0.10} & \multicolumn{2}{C}{x = 0.15} & \multicolumn{2}{C}{x = 0.20} \\
\cmidrule[\lightrulewidth](lr){2-3}
\cmidrule[\lightrulewidth](lr){4-5}
\cmidrule[\lightrulewidth](lr){6-7}
\cmidrule[\lightrulewidth](lr){8-9}
N & \lopp & \elopp & \lopp & \elopp & \lopp & \elopp & \lopp & \elopp \\
\midrule
1700 & 0.0709 & 0.0450 & 0.0275 & 0.0213 & 0.00728 & 0.00660 & 0.000520 & 0.000538 \\
1800 & 0.0708 & 0.0449 & 0.0274 & 0.0212 & 0.00722 & 0.00654 & 0.000496 & 0.000512 \\
1900 & 0.0708 & 0.0448 & 0.0274 & 0.0211 & 0.00716 & 0.00648 & 0.000474 & 0.000489 \\
2000 & 0.0707 & 0.0448 & 0.0273 & 0.0211 & 0.00711 & 0.00643 & 0.000454 & 0.000468 \\
\midrule
\lpx & \multicolumn{2}{C}{0.0692} & \multicolumn{2}{C}{0.0259} & \multicolumn{2}{C}{0.00593} & \multicolumn{2}{C}{0.0000188} \\
\bottomrule
\end{tabular}
\caption{Values of $\lopp$, $\elopp$ and $\lpx$.}
\label{Tab: TL}
\end{table}

\section{Proofs}
\label{S: Proofs}

Theorem~\ref{T: PGD} follows from Theorem~\ref{TA: GDX}, by using standard arguments of the large deviations theory; see for instance the statement and proof of Theorem~2.2.3 in~\citparen{DZ} or Example~23.10 in~\citet{Klenke}.
We present the proof at Subsection~\ref{SS: Proof PGD}, for the sake of completeness.

To facilitate the proofs of Theorems~\ref{T: PAX}, \ref{TA: CAX} and~\ref{TA: GDX}, we restate them in terms of the random variable $\Vfim = N - \Xfim$, which represents the number of the initially ignorant individuals who heard the rumour at the end of the process.
We also define $V(t) = N - X(t)$ for $t \geq 0$, and let $\psi: (0, 1] \rightarrow \bbR$ be the function given by
\[ \lpv = (1 - v) \log (1 - v) + v [\rf - \log v]. \]
Since $\lpv = \lpx[1 - v]$ for every $v \in (0, 1]$, our task is done once we prove the following results.

\begin{teo}
\label{T: PAV}
For each $j = 1, \dots, N$,
\[ P(\Vfim = j) = \frac{(N - 1)!}{(N - j)!} \frac{d_j}{N^{2 j}}. \]
\end{teo}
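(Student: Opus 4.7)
The plan is to analyze the embedded discrete-time Markov chain of $\{(X(t), Y(t))\}_{t \geq 0}$. At any state $(x, y)$ with $y \geq 1$, the total exit rate is $y N$, so the next jump is either a type-I transition (spreader meets ignorant, so $X$ decreases by $1$ and $Y$ increases by $1$) with probability $x/N$, or a type-S transition ($Y$ decreases by $1$) with probability $(N - x)/N$. If $\Vfim = j$, then absorption at $Y = 0$ occurs after exactly $j$ type-I and $j + 1$ type-S steps, for a total length of $2j + 1$.

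The key step is to read off the probability of a single absorbing trajectory $\pi$. Along any such trajectory, the $k$-th type-I step takes place with $X = N - k + 1$, contributing an overall factor $N! / [(N - j)! \, N^j]$; meanwhile, a type-S step preceded by exactly $m$ type-I steps contributes $m/N$. Writing $m_s(\pi)$ for the number of I-steps strictly before the S-step $s$, I would obtain
\[ P(\pi) = \frac{N!}{(N - j)! \, N^{2 j + 1}} \prod_{s} m_s(\pi), \]
and summing over valid trajectories,
\[ P(\Vfim = j) = \frac{N!}{(N - j)! \, N^{2 j + 1}} \, a_j, \quad \text{where} \quad a_j := \sum_\pi \prod_{s} m_s(\pi), \]
the sum being over all words in $\{I, S\}$ of length $2j + 1$ with $j$ I's and $j + 1$ S's such that every proper prefix has (number of I's) $\geq$ (number of S's). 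Crucially, $a_j$ does not depend on $N$.

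The last step is to identify $a_j$ with $d_j$ via Liskovets's recurrence~\eqref{F: Sequence}. Since $X + Y$ is non-increasing along the chain and decreases by one at every S-step, absorption occurs in at most $2 N + 1$ steps, so $\sum_{j = 1}^{N} P(\Vfim = j) = 1$. Substituting the expression above and clearing denominators yields
\[ \sum_{j = 1}^{N} \frac{N^{2 (N - j)}}{(N - j)!} \, a_j = \frac{N^{2 N}}{(N - 1)!} \quad \text{for every } N \geq 1, \]
which is precisely the recurrence defining $\{ d_n \}$ after isolating the $j = N$ summand. Direct inspection shows $a_1 = 1 = d_1$ (the only valid word is $ISS$, with $\prod_s m_s = 1$), and strong induction on $N$ gives $a_N = d_N$ for all $N$, producing the stated formula.

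The main obstacle I expect is the combinatorial identification $a_j = d_j$: there is no obvious direct bridge between the weighted Dyck-type sum $a_j$ and the enumeration of initially connected deterministic automata over a two-letter alphabet, so the proof must route through Liskovets's recurrence, which surfaces here from the normalization of the final-size distribution.
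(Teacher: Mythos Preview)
Your proposal is correct and follows essentially the same approach as the paper's own proof: analyze the embedded jump chain, factor the probability of each absorbing trajectory into an $N$-dependent part $(N-1)!/[(N-j)!\,N^{2j}]$ and an $N$-free combinatorial weight, then identify the resulting sequence with $\{d_j\}$ via the normalization $\sum_{j=1}^N P(\Vfim=j)=1$ and Liskovets's recurrence~\eqref{F: Sequence}. The only cosmetic difference is that the paper encodes trajectories as lattice paths in the $(V,Y)$-plane and records the weight as $\prod_{i=1}^{j} i^{m_i}$ (with $m_i$ the number of downward steps at first coordinate $i$), whereas you encode them as $\{I,S\}$-words and record $\prod_s m_s(\pi)$; these products are manifestly equal.
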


\begin{teo}
\label{TA: CAV}
For every $v \in (0, 1]$, we have that
\[ \lim_{N \to \infty} -\frac{1}{N} \, \log P(\Vfim = \chnv) = 
\lim_{N \to \infty} -\frac{1}{N} \, \log P(\Vfim = \tenv) = \lpv. \]
\end{teo}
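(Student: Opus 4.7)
The plan is to combine the exact formula for $P(\Vfim = j)$ in Theorem~\ref{T: PAV} with Stirling's formula and the asymptotic approximation $\log d_n = n \log n + (1 - \rf) n + o(n)$ (as $n \to \infty$) for the automata sequence, derived from \citet{BN}. This asymptotic is precisely what injects the constant $\rf$ into the rate $\lpv$, so isolating its contribution cleanly is the key conceptual step.

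Writing $j_N$ for either $\chnv$ or $\tenv$ with $v \in (0, 1)$ fixed, I would first take the logarithm of the exact probability and decompose
\[ -\frac{1}{N} \log P(\Vfim = j_N) = \frac{2 j_N}{N} \log N - \frac{1}{N} \log \frac{(N - 1)!}{(N - j_N)!} - \frac{1}{N} \log d_{j_N}. \]
Stirling applied to both factorials yields, for $j_N / N \to v \in (0, 1)$,
\[ \frac{1}{N} \log \frac{(N - 1)!}{(N - j_N)!} = v \log N - (1 - v) \log (1 - v) - v + o(1), \]
while the asymptotic for $d_{j_N}$ gives
\[ \frac{1}{N} \log d_{j_N} = v \log N + v \log v + (1 - \rf) v + o(1). \]
Since $|j_N - N v| \leq 1$, both choices $j_N = \chnv$ and $j_N = \tenv$ share these expansions, with the discrepancy contributing only $O(N^{-1} \log N)$.

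Adding the three pieces, the $\log N$ contributions cancel exactly: the $2 v \log N$ arising from $N^{2 j_N}$ is offset by one $v \log N$ from the factorial ratio and one $v \log N$ from $\log d_{j_N}$. What remains simplifies to
\[ -\frac{1}{N} \log P(\Vfim = j_N) \to (1 - v) \log (1 - v) - v \log v + \rf v = \lpv, \]
which is the desired limit.

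The boundary case $v = 1$, where $j_N = N$, must be handled separately because $(N - j_N)! = 0!$ lies outside the regime in which Stirling is applied above. Here Stirling applied to $(N-1)!$ combined with the asymptotic for $d_N$ immediately yields $-N^{-1} \log P(\Vfim = N) \to \rf = \lpv[1]$. The main (and entirely manageable) obstacle throughout is bookkeeping: one has to track three competing $\log N$ contributions and verify their exact cancellation; beyond that, there are no delicate probabilistic estimates, since Theorem~\ref{T: PAV} has already reduced the problem to a purely analytic computation.
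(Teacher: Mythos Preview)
Your proposal is correct and uses the same ingredients as the paper---the exact formula of Theorem~\ref{T: PAV}, Stirling's approximation, and the asymptotics for $d_n$ from \citet{BN}---but packages them slightly differently. The paper first establishes the sharper statement (Lemma~\ref{L: Asymp}) that $P(\Vfim = \je) \sim \frac{A}{\sqrt{2\pi N}}\, f(\je/N)\, [g(\je/N)]^{N}$ (and $P(\Vfim = N) \sim A\, B^{N}$), obtained by combining Lemmas~\ref{L: Stirling}, \ref{L: NS} and~\ref{L: Aut}; Theorem~\ref{TA: CAV} then drops out by taking logs. You instead take logarithms at the outset and work with the coarser expansion $\log d_n = n\log n + (1-\rf)\,n + o(n)$, which is exactly what Lemmas~\ref{L: NS} and~\ref{L: Aut} give after composing. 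Your route is a bit more streamlined for the stated goal, since the $o(1)$ precision suffices and the $\log N$ cancellation you track is the heart of the matter; the paper's sharper asymptotic, on the other hand, is reused in the proof of Lemma~\ref{L: LSA} for Theorem~\ref{TA: GDV}, so its extra precision is not wasted there.
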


\begin{teo}
\label{TA: GDV}
\begin{itemize}
\item[\parte{a}] If $0 < u < v \leq \xf$, then
\[ \lim_{N \to \infty} -\frac{1}{N} \, \log P(N u \leq \Vfim \leq N v) = \lpv[u]. \]
\item[\parte{b}] If $\xf \leq u < v < \vf$, then
\[ \lim_{N \to \infty} -\frac{1}{N} \, \log P(N u \leq \Vfim \leq N v) = \lpv. \]
\item[\parte{c}] If $\vf < u \leq 1$, then
\[ \lim_{N \to \infty} -\frac{1}{N} \, \log P(\Vfim \geq N u) = \lpv[u]. \]
\end{itemize}
\end{teo}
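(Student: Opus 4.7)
The plan is to derive Theorem~\ref{TA: GDV} from the pointwise asymptotic of Theorem~\ref{TA: CAV} by a standard sandwich argument, leveraging the monotonicity of $\psi$ on the three intervals that appear in the statement. Since $\lpv = \lpx[1-v]$, property \parte{ii} of $h$ immediately yields that $\psi$ is increasing on $(0,\xf]$, decreasing on $[\xf,\vf]$, and increasing on $[\vf,1]$. Consequently $\min_{w \in [u,v]} \lpv[w]$ equals $\lpv[u]$ in case \parte{a} and $\lpv$ in case \parte{b}, while $\min_{w \in [u,1]} \lpv[w] = \lpv[u]$ in case \parte{c}. These are precisely the right-hand sides of the three limits in the theorem, so the entire problem reduces to showing that $-\frac{1}{N} \log P$ converges to the minimum of $\psi$ over the relevant interval.

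For the easy direction, each event contains a single atom: $\{\Vfim = \lceil N u \rceil\}$ in cases \parte{a} and \parte{c}, and $\{\Vfim = \lfloor N v \rfloor\}$ in case \parte{b}. Applying Theorem~\ref{TA: CAV} to this single-atom probability bounds $\limsup_{N \to \infty} -\frac{1}{N}\log P$ from above by the claimed rate function value.

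For the matching inequality, I would bound the probability by the number of summands times the largest of them:
\[ P(N u \leq \Vfim \leq N v) \;\leq\; (N(v-u) + 1)\, \max_{\lceil N u \rceil \leq j \leq \lfloor N v \rfloor} P(\Vfim = j), \]
and analogously in case \parte{c}, where the sum runs up to $j = N$ and the prefactor becomes $N(1 - u) + 1$. The polynomial prefactor contributes only $O((\log N)/N)$ on the $-\frac{1}{N}\log$ scale. To handle the maximum, I would use that Theorem~\ref{TA: CAV}, proved via the explicit formula from Theorem~\ref{T: PAV} combined with Stirling's approximation and the asymptotic behaviour of $d_n$ referenced in the paper, in fact delivers the convergence $-\frac{1}{N}\log P(\Vfim = \lfloor N w \rfloor) \to \lpv[w]$ uniformly in $w$ on every compact subinterval of $(0, 1]$. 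By continuity of $\psi$, the maximum over the integer grid then converges to $\min_{w \in [u,v]} \lpv[w]$, which the monotonicity above identifies with the desired value.

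The main obstacle is precisely this uniform strengthening of Theorem~\ref{TA: CAV}. The explicit-formula approach makes it essentially free: after Stirling, $-\frac{1}{N}\log P(\Vfim = \lfloor N w \rfloor)$ equals $\lpv[w]$ up to an error of order $(\log N)/N$ that is uniform in $w$ over any compact subinterval of $(0, 1]$. With that in hand, the sandwich closes immediately and all three parts of the theorem drop out at once.
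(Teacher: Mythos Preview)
Your overall sandwich strategy is the same as the paper's: the single-atom bound via Theorem~\ref{TA: CAV} for one direction, and ``number of terms times largest term'' for the other. Where you diverge is in controlling the maximum. The paper does this in two explicit steps: Lemma~\ref{L: LSA} gives a uniform upper bound $P(\Vfim=j)\le C_0\,\fn\,(1+C_1/(Nu))$ for all $j\ge\tenu$, obtained from the one-sided inequality $d_n\le n\stir{2n}{n}$ together with the Stirling-type bounds; and then Lemma~\ref{L: Est} locates the maximiser of $\fn$ on each of the three ranges by a direct derivative analysis of $j\mapsto\fn$ (treating $j$ as a real variable and studying the sign of $\dn$). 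Your route instead upgrades Theorem~\ref{TA: CAV} to a uniform statement on compact subintervals of $(0,1]$ and then reads off the location of the maximum from the monotonicity of $\psi$ itself. This is legitimate: the error terms from Stirling and from Lemma~\ref{L: NS} are uniformly $O(1/j)+O(1/(N-j))$, and the $o(1)$ in $d_j\sim\Ek\,j\stir{2j}{j}$ depends only on $j\to\infty$, hence is uniform over $j\ge\tenu$; the endpoint $j=N$ is covered by Lemma~\ref{L: Asymp}\,\parte{a}. Your approach is conceptually cleaner in that it avoids the calculus of Lemma~\ref{L: Est} entirely, at the cost of having to argue uniformity carefully (including the regime $N-j=O(1)$, where Stirling on $(N-j)!$ is replaced by the trivial bound $\log(N-j)!=O(\log N)$). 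The paper's approach, by contrast, needs only the inequality $d_n\le n\stir{2n}{n}$ rather than the asymptotic, which makes the uniform upper bound immediate, but then pays for it with the somewhat intricate sign analysis in Lemma~\ref{L: Est}. One small caveat: the error is not quite $O((\log N)/N)$ uniformly, since Lemma~\ref{L: Aut}\,\parte{a} carries no rate; it is $O((\log N)/N)+\varepsilon(j)/N$ with $\varepsilon(j)\to 0$ as $j\to\infty$, which is still $o(1)$ uniformly and suffices.
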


\subsection{Proof of Theorem~\ref{T: PAV}}

We first observe that the continuous-time Markov chain $\{(V(t), Y(t))\}_{t \geq 0}$ proceeds according to the following transition scheme:
\begin{alignat*}{2}
&(V(t), Y(t)) \to (V(t) + 1, Y(t) + 1) 
&&\quad\text{at rate } \, Y(t) \left(N - V(t)\right), \\[0.1cm]
&(V(t), Y(t)) \to (V(t), Y(t) - 1) 
&&\quad\text{at rate } \, Y(t) \, V(t).
\end{alignat*}
The distribution of $\Vfim$ depends on $(V(t), Y(t))$ only through the embedded Markov chain, whose state space is the set of points $(r, s)$ of the two-dimensional integer lattice for which $0 \leq r \leq N$ and $0 \leq s \leq r + 1$.
The evolution of the embedded chain can be viewed as the motion of a particle through these lattice points, starting at the point $(0, 1)$ of the $xy$~plane.
(See Figure~\ref{Fig: CM MT}.)
From a point $(r, s)$ with $s > 0$, the particle moves either one step vertically downwards to $(r, s-1)$ or one step diagonally northeast (i.e., up and right) to $(r+1, s+1)$.
The probabilities of these two types of transitions are $r/N$ and $(1 - r/N)$, respectively.
Following \citet{DG}, we say that this process is \textit{strictly evolutionary}: once a state is visited and left, it is never visited again, so each state is entered either once or not at all.
The states $\{ (r, 0): r = 1, \dots, N \}$ are absorbing, thus the particle halts once it hits the $x$-axis.
If the particle hits the line $r = N$, then only the vertically downward transitions are allowed, until it reaches the point $(N, 0)$.
For each $j = 1, \dots, N$, the event $\{ \Vfim = j \}$ means that the particle hits the $x$-axis precisely at the point $(j, 0)$.

\begin{figure}[!htbp]
\centering
\includegraphics[scale=0.9]{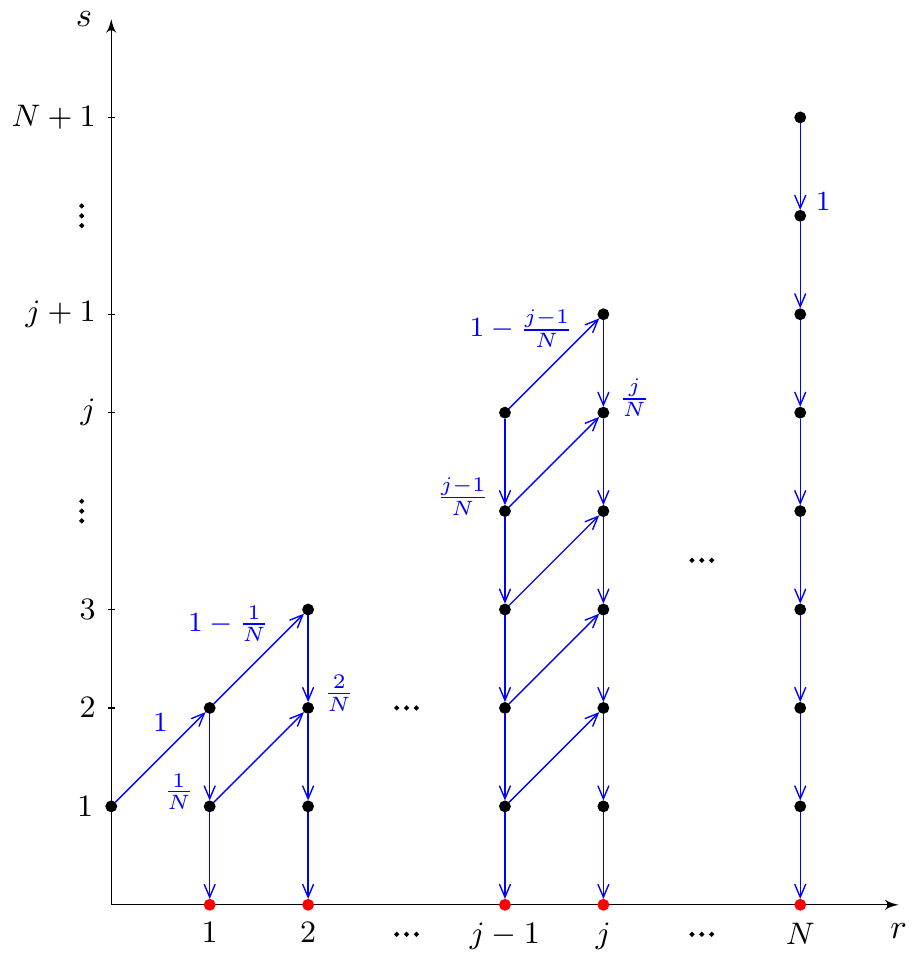}
\caption{Transitions of the embedded Markov chain.}
\label{Fig: CM MT}
\end{figure}

Now we fix $j \in \{ 1, \dots, N \}$, and define $\ccam$ to be the set of all lattice paths running from $(0, 1)$ to $(j, 0)$ that use the steps in $\{ (0, -1), (1, 1) \}$ and that hit the $x$-axis for the first time at the point $(j, 0)$.
Given any path $\cam \in \ccam$, we note that, for each $\ell \in \{ 0, \ldots, j - 1 \}$, there is exactly one northeast transition in $\cam$, which corresponds to the change of the value of the first coordinate from $\ell$ to $\ell + 1$.
Therefore, $\cam$ has $j$ northeast transitions.
As each of these transitions results in an increase by~$1$ of the second coordinate, we conclude that the number of downward transitions in $\cam$ equals~$j + 1$.
For each $i = 1, \dots, j$, let $m_i = m_i(\cam)$ denote the number of downward transitions of the path $\cam$ that are made when the first coordinate equals~$i$.
Then,
\[ P(\Vfim = j) = \sum_{\cam \in \ccam} \prod_{\ell = 0}^{j - 1} \left(\frac{N - \ell}{N}\right)
\prod_{i = 1}^{j} \left(\frac{i}{N}\right)^{m_i}. \]
Defining $a_j(\cam) = \prod_{i = 1}^{j} i^{m_i}$ (which does not depend on $N$) and $b_j = \sum_{\cam \in \ccam} a_j(\cam)$, we obtain that
\[ P(\Vfim = j) = \frac{(N - 1)!}{(N - j)!} \frac{b_j}{N^{2 j}}. \]
Clearly $b_1 = 1$.
Using that $\sum_{j = 1}^{N} P(\Vfim = j) = 1$, we see that $\{b_j\}$ satisfies the same recursive formula as $\{d_j\}$, whence the result follows. \qed

\subsection{Auxiliary results}

We present here some definitions and theorems that will be used in the proofs of Theorems~\ref{TA: CAV} and \ref{TA: GDV}.
We start off with Stirling's asymptotic estimate and bounds for factorials.

\begin{lem}
\label{L: Stirling}
\begin{itemize}
\item[\parte{a}] $n! \sim \sqrt{2\pi} \, n^{n + 1/2} \, e^{-n}$ as $n \to \infty$.
\item[\parte{b}] $\sqrt{2\pi} \, n^{n + 1/2} \, e^{-n} \leq n! \leq \sqrt{2\pi} \, n^{n + 1/2} \, e^{-n} \, e^{1/12}$ for all $n \geq 1$.
\end{itemize}
\end{lem}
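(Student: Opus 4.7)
Stirling's formula is entirely classical, so the plan is mainly to invoke standard sources rather than derive it from scratch; both parts are treated in Feller's textbook, and the tighter estimate in part (b) is due to Robbins' short note. Nevertheless, I sketch the natural route, which unifies both parts by extracting the sharp constants from a single remainder estimate.

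For part \parte{a}, the first step is to compare $\log n! = \sum_{k = 1}^{n} \log k$ with the integral $\int_1^{n} \log x \, dx = n \log n - n + 1$. Applying the Euler--Maclaurin summation formula yields
\[ \log n! = \bigl(n + \tfrac{1}{2}\bigr) \log n - n + C + r_n, \]
where $r_n \to 0$ as $n \to \infty$ and $C$ is an absolute constant. Identifying $C = \tfrac{1}{2} \log(2 \pi)$ is then done most cheaply by applying the expansion to the ratio $(2 n)! / (n!)^2$ and invoking the Wallis product $\prod_{k \geq 1} (2 k)^2 / [(2 k - 1)(2 k + 1)] = \pi/2$, which forces $e^{C} = \sqrt{2 \pi}$.

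For part \parte{b}, the plan is to refine the remainder term. Writing
\[ \log n! - \bigl(n + \tfrac{1}{2}\bigr) \log n + n - \tfrac{1}{2} \log(2 \pi) = \sum_{k = n}^{\infty} \left[\bigl(k + \tfrac{1}{2}\bigr) \log\!\Bigl(1 + \tfrac{1}{k}\Bigr) - 1\right], \]
which follows from part \parte{a} by telescoping, a Taylor expansion shows that each summand is positive and bounded above by $1 / [12 \, k \, (k + 1)]$; summing the telescoping upper bound then confines the left-hand side to the interval $(0, \, 1/(12 n)]$. Exponentiating and using $e^{1/(12 n)} \leq e^{1/12}$ for $n \geq 1$ gives the explicit two-sided bounds.

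The main subtlety is pinning down the constant $\sqrt{2 \pi}$ in part \parte{a}; the rest is routine estimation of a telescoping sum. Since the lemma will be used only as a black-box tool in the asymptotic analysis of Theorems~\ref{TA: CAV} and~\ref{TA: GDV}, in practice I expect the final write-up to simply cite the two classical sources above.
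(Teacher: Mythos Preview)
Your proposal is correct and matches the paper's treatment: the paper does not prove the lemma either but simply refers the reader to Feller (Section~II.9), noting that part~\parte{b} follows from Equation~(9.15) there. Your sketch of the Euler--Maclaurin/Wallis/Robbins argument is accurate, but in the end you and the paper do the same thing---cite the classical source.
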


\noindent
The proof of this result can be found in~\citet[Section~II.9]{Feller}; formula in part~\parte{b} is an immediate consequence of Equation~\parte{9.15} therein.

In the sequel, we will show that there is a strong relation between the numbers $d_n$ and the Stirling numbers of the second kind.
Recall that, for $m$, $n$ nonnegative integers, the \textit{Stirling number of the second kind}, denoted by $\stir{n}{m}$, is the number of ways of partitioning a set of $n$ elements into $m$ nonempty subsets.
We refer to~\citet[Section 6.1]{GKP} for more details on this subject.
The following result provides the asymptotic estimate of $\stir{2 n}{n}$ for large values of~$n$, which can be obtained by means of general techniques of analytic combinatorics (see~\citparen{Good}).

\begin{lem}
\label{L: NS}
Define the constants
\[ \al = \sqrt{\frac{1}{2 \pi \, (2 \, \vf - 1)}} \quad \text{and} \quad
\be = \frac{1}{e \, \vf \, (1 - \vf)}. \]
Then, as $n \to \infty$,
\[ \stir{2 n}{n} = \al \, \be^n \, n^{n - 1/2} \left(1 + \ogr{\frac{1}{n}}\right). \]
\end{lem}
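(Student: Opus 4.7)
The plan is to apply the saddle-point method to a Cauchy integral representation of $\stir{2n}{n}$, following the analytic-combinatorics approach in~\citet{Good}. The starting point is the classical exponential-generating-function identity
\[ \stir{2n}{n} = \frac{(2n)!}{n!} \, [x^{2n}] (e^x - 1)^n, \]
together with Cauchy's formula
\[ [x^{2n}] (e^x - 1)^n = \frac{1}{2 \pi i} \oint \frac{e^{n \phi(z)}}{z} \, dz, \qquad \phi(z) = \log(e^z - 1) - 2 \log z, \]
where the contour is a small positively oriented circle around the origin.

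First I locate the positive real saddle point. The equation $\phi'(z_0) = 0$ amounts to $z_0 \, e^{z_0} = 2 \, (e^{z_0} - 1)$, which rearranges to $(z_0 - 2) \, e^{z_0 - 2} = - 2 \, e^{-2}$. Comparing with the definition $\xf = - W_0(-2 \, e^{-2})/2$, I read off $z_0 = 2 - 2 \xf = 2 \vf$, whence $e^{z_0} = 1/\xf$ and $e^{z_0} - 1 = \vf/\xf$. Short routine calculations then give $e^{\phi(z_0)} = 1/(4 \, \xf \, \vf)$ and $\phi''(z_0) = (2 \vf - 1)/(2 \vf^2)$, which is strictly positive since $\vf > 1/2$.

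Next I deform the contour onto the circle $|z| = z_0$, parametrize $z = z_0 \, e^{i \theta}$, and apply the standard steepest-descent procedure: Taylor expand $\phi(z_0 e^{i\theta}) = \phi(z_0) - \tfrac{1}{2} z_0^2 \, \phi''(z_0) \, \theta^2 + \ogr{\theta^3}$, replace the resulting Gaussian integral by one over $\bbR$, and control the remainder. This yields
\[ [x^{2n}] (e^x - 1)^n = \frac{1}{(4 \, \xf \, \vf)^n \cdot 2 \sqrt{\pi \, n \, (2 \vf - 1)}} \left(1 + \ogr{1/n}\right). \]
Combining this with Lemma~\ref{L: Stirling}\parte{a} in the form $(2n)!/n! = \sqrt{2} \cdot 4^n \, n^n \, e^{-n} \, (1 + \ogr{1/n})$ and tidying up with $\be = 1/(e \, \xf \, \vf)$ and $\al = 1/\sqrt{2 \pi (2 \vf - 1)}$ produces the claimed asymptotic.

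The main technical step is the rigorous control of the error in the saddle-point expansion: one must verify a global bound of the form $\textnormal{Re} \, \phi(z_0 \, e^{i \theta}) \leq \phi(z_0) - c \, \theta^2$ for some $c > 0$ and all $\theta \in [-\pi, \pi]$, so that the contribution away from a shrinking neighbourhood of $\theta = 0$ is exponentially negligible, and then to estimate the cubic remainder in the Taylor expansion carefully enough to obtain the multiplicative $1 + \ogr{1/n}$ error after Gaussian integration. All the other computations, in particular the identification of $z_0$ and the evaluation of $e^{\phi(z_0)}$ and $\phi''(z_0)$, are routine once the defining equation $\xf \, e^{-2 \xf} = e^{-2}$ is unpacked.
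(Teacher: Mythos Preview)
The paper does not actually prove this lemma: immediately after the statement it simply observes that the result is a restatement of Lemma~6 in \citet{BN} with $k = 2$, with the constants $\al$ and $\be$ rewritten in terms of~$\vf$, and refers to \citet{Good} for the general analytic-combinatorics technique. In other words, the paper treats Lemma~\ref{L: NS} as a quotation rather than as something to be proved.

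Your proposal therefore goes beyond what the paper does, by sketching the saddle-point derivation explicitly. The computations you give are correct: from the identity $\stir{2n}{n} = \frac{(2n)!}{n!}\,[x^{2n}](e^x-1)^n$ and $\phi(z) = \log(e^z-1) - 2\log z$, the saddle equation $z_0 e^{z_0} = 2(e^{z_0}-1)$ indeed gives $z_0 = 2\vf$, $e^{z_0} = 1/\xf$, $e^{\phi(z_0)} = 1/(4\,\xf\,\vf)$, and $\phi''(z_0) = (2\vf-1)/(2\vf^2)$; the Gaussian integral then yields $[x^{2n}](e^x-1)^n \sim \bigl((4\xf\vf)^n\,2\sqrt{\pi n(2\vf-1)}\bigr)^{-1}$, which combined with $(2n)!/n! = \sqrt{2}\cdot 4^n n^n e^{-n}(1+\ogr{1/n})$ produces exactly $\al\,\be^n\,n^{n-1/2}$. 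The one genuinely nontrivial step you leave as an assertion---the global bound on $\textnormal{Re}\,\phi(z_0 e^{i\theta})$ away from $\theta=0$ and the control of the cubic remainder to get the multiplicative $1+\ogr{1/n}$ error rather than merely $o(1)$---is standard saddle-point technology but is not verified in your write-up; the paper sidesteps this entirely by citing the literature.
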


\noindent
We observe that this lemma is a restatement of Lemma~6 in~\citet{BN} with $k = 2$ and constants $\al$ and $\be$ expressed in terms of~$\vf$.

Next, we present an asymptotic approximation and an upper bound for~$d_n$, showing how this number is related to the Stirling number $\stir{2 n}{n}$.

\begin{lem}
\label{L: Aut}
\begin{itemize}
\item[\parte{a}] As $n \to \infty$, one has
\[ d_n \sim \Ek \, n \, \stir{2 n}{n} \quad \text{where} \quad
\Ek = 2 - \frac{1}{\vf}. \]
\item[\parte{b}] $d_n \leq n \, \displaystyle\stir{2 n}{n}$ for all $n \geq 1$.
\end{itemize}
\end{lem}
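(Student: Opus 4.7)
Since the paper attributes both bounds to \citet{BN}, my approach is to reduce Lemma~\ref{L: Aut} to a labeled enumeration and then invoke their analysis. Let $A_n$ denote the number of labeled initially connected complete deterministic automata on state set $\{1, \dots, n\}$ over $\{a, b\}$ with distinguished initial state $1$. Because the pointed automorphism group of such an automaton is trivial (the BFS-labeling from the initial state is canonical, so any label-preserving automorphism fixing $1$ must also fix $\delta(1, a)$, $\delta(1, b)$, and, by induction on BFS depth, all states), the relabeling action of $S_{n-1}$ on the non-initial states is free, and thus $A_n = (n - 1)! \, d_n$. Any estimate for $A_n$ therefore transfers directly to $d_n$.

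For part~(b), I would encode each such automaton as the word
\[ w = (\delta(1, a), \delta(1, b), \delta(2, a), \delta(2, b), \ldots, \delta(n, a), \delta(n, b)) \in \{1, \dots, n\}^{2 n}, \]
with non-initial states relabeled in BFS order of first discovery. Initial connectivity forces the image of $w$ to cover $\{2, \dots, n\}$, and the BFS convention pins down the labels $2, 3, \dots, n$ in order of first appearance. The fibres of $w$ therefore correspond to a set partition of $[2n]$ into at most $n$ nonempty blocks; a case analysis on whether state $1$ itself reappears in $w$ (which determines whether there is an additional ``block labeled $1$'') provides an injection into pairs consisting of a partition of $[2n]$ into $n$ blocks together with a distinguished block, yielding $d_n \leq n \, \stir{2n}{n}$.

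For part~(a), I would refine the above encoding to an asymptotic count. Combining Lemma~\ref{L: NS} with the singularity analysis carried out in \citet{BN}, one shows that the density of partitions of $[2n]$ into $n$ blocks that arise from a genuine BFS-encoded initially connected automaton converges to an explicit constant, and evaluating this limit gives $\kappa = 2 - 1/\vf$; the asymptotic $d_n \sim \kappa \, n \, \stir{2n}{n}$ then follows by Lemma~\ref{L: NS}. The \textbf{main obstacle} is identifying the value of $\kappa$: while the injection for part~(b) is elementary, extracting the exact asymptotic constant requires locating the dominant singularity of a suitable bivariate generating function for initially connected transition structures, which is the delicate step I would defer to the analysis in \citet{BN}.
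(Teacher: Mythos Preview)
Your proposal is essentially correct and, like the paper, ultimately defers the substantive work to \citet{BN}; the difference is one of packaging rather than of mathematical content.

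The paper's own proof is almost purely citational: it identifies $d_n$ with $|\mathcal{D}_n|$, the number of accessible complete deterministic transition structures, observes that the full automata count satisfies $|\mathcal{A}_n| = 2^n\,d_n$ (the factor $2^n$ coming from the choice of accepting states), and then reads off both the asymptotic $|\mathcal{A}_n|\sim \kappa\,n\,2^n\stir{2n}{n}$ from Korshunov/BN (Theorem~18) and the inequality $|\mathcal{D}_n|\le n\stir{2n}{n}$ from Theorem~2, Equation~(1) and Lemma~9 of \citet{BN}. No combinatorial argument is reproduced.

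Your route is the same in spirit but you unpack part~\parte{b} explicitly: the BFS encoding of the transition word $w\in[n]^{2n}$ and the injection into marked partitions of $[2n]$ into $n$ blocks is precisely the ``boxed diagram'' idea underlying BN's Lemma~9, so your sketch recovers their bound rather than merely citing it. Two minor remarks: first, the detour through $A_n=(n-1)!\,d_n$ is not actually used once you pass to the BFS-canonical encoding, since that encoding already enumerates unlabelled structures directly; you could drop $A_n$ without loss. Second, for part~\parte{a} you are right that the constant $\kappa$ is the delicate point; the paper does not extract it either but cites Korshunov via BN for the asymptotic and \citet{AEAA} for the closed form $\kappa=2-1/\vf$, so your deferral is in line with what the paper does.
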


\begin{proof}
This basically follows from results stated in~\citet{BN}.
This paper is devoted to the problem of the enumeration of the set $\aut$ of nonisomorphic initially connected, complete and deterministic automata of size $n$ on a $k$-letters alphabet.
Recall that the sequence $\{ d_n \}_{n \geq 1}$ refers to a similar class of automata, but with $n$ unlabelled states on an alphabet of size $k = 2$.
Thus, denoting by $\tstru$ the set of all the accessible complete and deterministic transition structures of base automata with $n$ states, we conclude from Lemma~1 in~\citep{BN} that
\[ |\aut| = 2^n \, |\tstru| = 2^n \, d_n. \]
Hence, part~\parte{a} is a consequence of the fact that
\[ |\aut| \sim \Ek \, n \, 2^n \, \stir{2 n}{n}. \]
This asymptotic estimate for $|\aut|$ was originally established by~\citet{Korshunov}, and reformulated in terms of the Stirling numbers in~\citep[Theorem~18]{BN}.
The simplified expression for the constant $\Ek$ (which is denoted in~\citep{BN} by $E_2$) is due to \citet{AEAA}.

To finish the proof, we note that from Theorem~2, Equation~\parte{1} and Lemma~9 in \citep{BN}, it follows that, for every $n \geq 1$,
\[ d_n = |\tstru| \leq n \, \displaystyle\stir{2 n}{n}, \]
which yields part~\parte{b}.
\end{proof}

\subsection{Proof of Theorem~\ref{TA: CAV}}

Now we introduce some definitions that will be used in the sequel.
We define the constants
\[ A = \frac{\sqrt{2 \, \vf - 1}}{\vf} \approx 0.9669 \quad \text{and} \quad
B = e^{-\rf} = \frac{1}{e^2 \, \vf \, (1 - \vf)} \approx 0.8359. \]
We observe that
\begin{equation}
\label{F: Rel}
A = \sqrt{2 \pi} \, \al \, \Ek \quad \text{and} \quad B = \be / e.
\end{equation}
For $v \in (0, 1)$, we define
\begin{equation*}
f(v) = \sqrt{\frac{v}{1-v}} \quad \text{and} \quad
g(v) = \exp\{ -\lpv \} = \frac{B^{v} \, v^v}{(1-v)^{1-v}}.
\end{equation*}

Theorem~\ref{TA: CAV} is an immediate consequence of the following result.

\begin{lem}
\label{L: Asymp}
\begin{itemize}
\item[\parte{a}] As $N \to \infty$, we have that
\[ P(\Vfim = N) \sim A \, B^N. \]
\item[\parte{b}] For $v \in (0, 1)$, let $\je = \je(v)$ be either $\chnv$ or $\tenv$.
Then, as $N \to \infty$,
\[ P(\Vfim = \je) \sim \frac{A}{\sqrt{2 \pi N}} \, f(\je/N) \, \left[g(\je/N)\right]^{N}. \]
\end{itemize}
\end{lem}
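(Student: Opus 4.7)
The plan is to substitute the exact formula
\[ P(\Vfim = j) = \frac{(N - 1)!}{(N - j)!} \frac{d_j}{N^{2 j}} \]
from Theorem~\ref{T: PAV} into the left-hand side, and then replace each ingredient by its known asymptotic: Stirling's formula (Lemma~\ref{L: Stirling}\parte{a}) for the factorials, and the concatenation of Lemmas~\ref{L: Aut}\parte{a} and~\ref{L: NS} for $d_j$, which together yield
\[ d_j \;\sim\; \al\,\Ek\,\be^{j}\,j^{j + 1/2}. \]
The identifications $A = \sqrt{2\pi}\,\al\,\Ek$ and $B = \be/e$ recorded in~\eqref{F: Rel} will convert the outcome into the form stated in the lemma.

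For part~\parte{a}, the simplification is immediate: setting $j = N$ eliminates $(N - j)!$, and using $(N - 1)! = N!/N \sim \sqrt{2\pi}\,N^{N - 1/2}\,e^{-N}$ together with the above asymptotic for $d_N$ gives
\[ P(\Vfim = N) \;\sim\; \sqrt{2\pi}\,\al\,\Ek \left(\frac{\be}{e}\right)^{\!N} \;=\; A\,B^{N}. \]

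For part~\parte{b}, set $j = \je$; since $v \in (0,1)$, both $j \to \infty$ and $N - j \to \infty$, so Stirling also applies to $(N - j)!$ and yields
\[ \frac{(N - 1)!}{(N - j)!} \;\sim\; \frac{N^{N - 1/2}}{(N - j)^{N - j + 1/2}}\, e^{-j}. \]
Multiplying by $d_j / N^{2 j}$ and using $B = \be/e$ leaves
\[ P(\Vfim = j) \;\sim\; \al\,\Ek\;\frac{j^{j + 1/2}\,B^{j}\,N^{N - 2 j - 1/2}}{(N - j)^{N - j + 1/2}}. \]
A direct expansion of the target quantity $\dfrac{A}{\sqrt{2\pi N}}\,f(j/N)\,[g(j/N)]^{N}$, using the definitions $f(v) = \sqrt{v/(1-v)}$ and $g(v) = B^{v} v^{v}/(1-v)^{1-v}$, together with $A = \sqrt{2\pi}\,\al\,\Ek$ and the rewriting of $(j/N)^{j}$ and $(1 - j/N)^{N - j}$ in terms of $j$ and $N - j$, reproduces exactly the same expression.

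The main obstacle is purely the bookkeeping of exponents; all the substantive analytic content is already packaged in the three lemmas invoked above. One small point worth verifying is that the $\ogr{1/n}$ error in Lemma~\ref{L: NS} and the $1 + o(1)$ factor in Lemma~\ref{L: Aut}\parte{a}, multiplied together with the Stirling errors, combine into a single $1 + o(1)$ correction so that the $\sim$ equalities propagate cleanly through the computation; in part~\parte{b} this is automatic because $j \to \infty$ and $N - j \to \infty$.
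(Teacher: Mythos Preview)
Your proposal is correct and follows essentially the same approach as the paper: apply Theorem~\ref{T: PAV}, replace the factorials by Stirling's formula and $d_j$ by the combined asymptotic from Lemmas~\ref{L: Aut}\parte{a} and~\ref{L: NS}, then use the identities~\eqref{F: Rel} to recognise $A$ and $B$. In fact you have spelled out part~\parte{b} in more detail than the paper, which simply remarks that it is proved analogously to part~\parte{a} once one notes that both $\je \to \infty$ and $N - \je \to \infty$.
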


\begin{proof}[Proof of Lemma~\ref{L: Asymp}]
Using Theorem~\ref{T: PAV}, Lemmas~\ref{L: Stirling}, \ref{L: NS} and \ref{L: Aut}, and Equation~\eqref{F: Rel}, we obtain that, as $N \to \infty$,
\begin{align*}
P(\Vfim = N) &= \frac{N!}{N^{2 N + 1}} \, d_N \sim \frac{\sqrt{2\pi} \, N^{N + 1/2} \, e^{-N}}{N^{2 N + 1}} \, \Ek \, N \, \stir{2 N}{N} \\[0.1cm]
&\sim \frac{\sqrt{2\pi} \, \Ek \, e^{-N}}{N^{N - 1/2}} \, \al \, \be^N \, N^{N - 1/2} = A \, B^N.
\end{align*}
This shows part~\parte{a}.
Part~\parte{b} can be proved similarly (note that, since $v \in (0, 1)$, both $\je$ and $N - \je$ tend to infinity as $N \to \infty$).
\end{proof}

\subsection{Proof of Theorem~\ref{TA: GDV}}

To prove Theorem~\ref{TA: GDV}, we establish suitable asymptotic upper bounds for the absorption probabilities.
We define
\[ \fn =
\left\{
\begin{array}{cl}
f(j/N) \, \left[g(j/N)\right]^{N} & \text{for } 1 \leq j \leq N - 1, \\[0.1cm]
B^N															  & \text{for } j = N.
\end{array}	\right. \]
We will use the following results.

\begin{lem}
\label{L: LSA}
For every $u \in (0, 1)$, there exist positive constants $C_0$ and $C_1$ such that, for all sufficiently large $N$,
\[ P(\Vfim = j) \leq C_0 \, \fn \left( 1 + \frac{C_1}{N u} \right) 
\ \text{ for each } \ j = \tenu, \dots, N. \]
\end{lem}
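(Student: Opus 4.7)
The plan is to combine the exact distribution from Theorem~\ref{T: PAV} with the three explicit bounds at our disposal: Lemma~\ref{L: Stirling}\parte{b} for the factorial ratio, Lemma~\ref{L: Aut}\parte{b} for the automata count, and Lemma~\ref{L: NS} for the Stirling number asymptotics. More precisely, I would first fix a constant $C_2 > 0$ and an integer $j_0$ such that Lemma~\ref{L: NS} yields
\[ \stir{2j}{j} \leq \al \, \be^{j} \, j^{j - 1/2} \left(1 + \frac{C_2}{j}\right) \quad \text{for every } j \geq j_0. \]
Since $j \geq \tenu \geq N u$, this bound applies uniformly to all $j$ in the range once $N \geq j_0 / u$. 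Combining it with Lemma~\ref{L: Aut}\parte{b} gives $d_j \leq \al \, \be^{j} \, j^{j + 1/2}(1 + C_2/j)$.

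Next, I would plug Lemma~\ref{L: Stirling}\parte{b} into the factor $(N-1)!/(N-j)!$ of Theorem~\ref{T: PAV} for $j \leq N - 1$, using $(N-1)! = N!/N$ and the lower bound for $(N-j)!$. After cancellation the prefactor becomes
\[ \frac{(N-1)!}{(N-j)!} \leq \frac{e^{1/12} \, N^{N-1/2} \, e^{-j}}{(N - j)^{N - j + 1/2}}. \]
Assembling everything and writing $v = j/N$, the exponents of $N$ collapse cleanly: from $N^{N - 1/2 - 2j}$ on one side and $N^{N(2v - 1)}$ coming from the ratio $(Nv)^{Nv+1/2}/(N(1-v))^{N(1-v)+1/2}$ on the other, one is left with exactly a factor of $N^{-1/2}$. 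The remaining terms regroup into $\sqrt{v/(1-v)} = f(v)$ and $[B^v v^v/(1-v)^{1-v}]^N = [g(v)]^N$, using $B = \be/e$. Consequently,
\[ P(\Vfim = j) \leq \al \, e^{1/12} \, N^{-1/2} \, \fn \left(1 + \frac{C_2}{j}\right) \quad \text{for } \tenu \leq j \leq N - 1, \]
and bounding $N^{-1/2} \leq 1$ and $1/j \leq 1/(Nu)$ gives the stated inequality.

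The case $j = N$ must be handled separately, since the formula for $\fn$ changes and $(N - j)^{N - j + 1/2}$ is not defined. Here one applies Lemma~\ref{L: Stirling}\parte{b} directly to $(N-1)!$ and Lemma~\ref{L: Aut}\parte{b} together with Lemma~\ref{L: NS} to $d_N$; the powers of $N$ again cancel exactly, producing $P(\Vfim = N) \leq \sqrt{2\pi} \, \al \, e^{1/12} \, B^N (1 + C_2/N)$, which matches the target with the same constants after enlarging $C_0$ by a factor of $\sqrt{2\pi}$ and using $1/N \leq 1/(Nu)$. Taking $C_0 := \sqrt{2\pi} \, \al \, e^{1/12}$ and $C_1 := C_2$ then gives a uniform bound valid for all $j \in \{\tenu, \dots, N\}$.

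The proof is essentially bookkeeping, and no step presents a deep obstacle; the only delicate point is organizational, namely keeping the two cases $j \leq N - 1$ and $j = N$ consistent with the piecewise definition of $\fn$, and verifying that the uniform error term $C_2/j$ from Lemma~\ref{L: NS} can be absorbed into $C_1/(Nu)$ uniformly in the relevant range of $j$. This is precisely where the hypothesis $j \geq \tenu$ is used.
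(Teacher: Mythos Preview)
Your proposal is correct and follows essentially the same route as the paper: both arguments feed Theorem~\ref{T: PAV} through Lemma~\ref{L: Stirling}\parte{b}, Lemma~\ref{L: Aut}\parte{b}, and the $\ogr{1/n}$ bound from Lemma~\ref{L: NS}, treat the case $j=N$ separately, and absorb the error term via $1/j \leq 1/(Nu)$. The only cosmetic difference is that the paper writes $(N-1)! = N!/N$ and carries the extra $N$ into the denominator $N^{2j+1}$, whereas you bound $(N-1)!/(N-j)!$ directly; the resulting constants and the final estimate are identical.
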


\begin{lem}
\label{L: Est}
\begin{itemize}
\item[\parte{a}] If $u \in (0, \xf)$, then for all sufficiently large $N$,
\[ \fn \leq \fn[\tenu] 
\ \text{ for every } \ j = \tenu, \dots, \chnxf. \]
\item[\parte{b}] If $v \in (\xf, \vf)$, then for all sufficiently large $N$,
\[ \fn \leq \fn[\chnv] 
\ \text{ for every } \ j = \tenxf, \dots, \chnv. \]
\item[\parte{c}] If $u \in (\vf, 1)$, then for all sufficiently large $N$,
\[ \fn \leq \fn[\tenu] 
\ \text{ for every } \ j = \tenu, \dots, N. \]
\end{itemize}
\end{lem}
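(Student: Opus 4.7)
The plan is to regard $\fn$ as the restriction to the grid $v = j/N$ of the continuous function
\[
\Phi(v) := f(v)\,[g(v)]^N,
\]
and to derive monotonicity on each of the three intervals from the derivative
\[
\frac{d}{dv}\log\Phi(v) \;=\; \frac{1}{2\,v(1-v)} \;-\; N\,\psi'(v),
\qquad
\psi'(v) \;=\; \log\frac{\xf\,\vf}{v(1-v)},
\]
in which the closed form of $\psi'$ uses $\rf - 2 = \log(\xf\vf)$. The sign pattern of $\psi'$ mirrors the trichotomy of the lemma: $\psi' > 0$ on $(0,\xf)\cup(\vf,1)$, $\psi' < 0$ on $(\xf,\vf)$, and $\psi'$ vanishes exactly at $\xf$ and $\vf$.

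Part (b) is immediate. On $(\xf,\vf)$, both summands on the right-hand side above are strictly positive, so $\Phi$ is strictly increasing. Since $\xf$ is irrational, $\tenxf/N > \xf$, and every grid point $j/N$ for $j \in \{\tenxf,\dots,\chnv\}$ lies in $(\xf,\chnv/N] \subset (\xf,\vf)$, so $\fn = \Phi(j/N) \leq \Phi(\chnv/N) = \fn[\chnv]$.

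For part (a) I would fix $\delta \in (0,(\xf-u)/2)$ and split the grid $[\tenu/N,\chnxf/N]$ into a bulk part $[\tenu/N,\xf-\delta]$ and a boundary part $[\xf-\delta,\chnxf/N]$. On the bulk, $\psi'$ is decreasing on $(0,1/2]$ by concavity of $\psi$ there, so $\psi'(v) \geq \psi'(\xf-\delta) > 0$; combined with the uniform bound $1/[2v(1-v)] \leq 1/[2u(1-u)]$, this forces the derivative strictly negative for $N$ large, making $\Phi$ strictly decreasing, whence $\Phi(v) \leq \Phi(\tenu/N) = \fn[\tenu]$. On the boundary part monotonicity may fail, but $f$ is increasing on $[0,1)$ and $g$ is non-increasing on $[0,\xf]$, so term-wise maximization gives the crude estimate
\[
\sup_{v \in [\xf-\delta,\,\xf]}\Phi(v) \;\leq\; f(\xf)\,[g(\xf-\delta)]^N;
\]
the ratio of this bound to $\Phi(\tenu/N)$ is $[f(\xf)/f(\tenu/N)]\cdot\exp\{-N\,[\lpv[\xf-\delta]-\lpv[\tenu/N]]\}$, and since $\lpv[\xf-\delta] > \lpv[u]$ strictly while $\tenu/N \to u$, the exponent tends to $-\infty$ and the ratio drops below $1$ for $N$ large.

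Part (c) is structurally the same. Since $\psi$ is convex on $[1/2,1)$ with $\psi'(\vf) = 0$, one has $\psi'(v) \geq \psi'(u) > 0$ throughout $[u,1)$, so on the bulk region $[\tenu/N,1-\delta]$ the derivative is negative for $N$ large and $\Phi$ is decreasing. The boundary region $v \in [1-\delta,(N-1)/N]$ is more delicate because both $1/[2v(1-v)]$ and $N\psi'(v)$ blow up; writing $v = 1 - w$, the negativity condition becomes $2Nw(1-w)\log[\xf\vf/(w(1-w))] > 1$, whose left-hand side is unimodal in $w$ on $(0,\xf\vf)$, so its minimum on $[1/N,\delta]$ is attained at an endpoint, and both endpoint values diverge with $N$. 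Hence $\Phi$ is decreasing throughout $[\tenu/N,(N-1)/N]$, and the single remaining case $j = N$ is settled by $\fn[N] = B^N$ together with $g(u) > B$ (equivalently $\lpv[u] < \rf$, since $\psi$ is increasing on $[\vf,1)$ with $\psi(1) = \rf$), which gives $\fn[\tenu]/\fn[N] \sim f(u)\,[g(u)/B]^N \to \infty$. The principal obstacle in parts (a) and (c) is the degeneracy of $\psi'$ at the critical points $\xf$ and $\vf$, together with the blow-up of $1/(1-v)$ at $v = 1$ in part (c); this is precisely what forces the bulk/boundary splitting and the hybrid monotonicity-plus-direct-comparison argument.
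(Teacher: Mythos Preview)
Your argument is correct and the overall structure---passing to the continuous function $\Phi(v)=f(v)[g(v)]^N$ and analysing $\frac{d}{dv}\log\Phi(v)=\frac{1}{2v(1-v)}-N\psi'(v)$---matches the paper's set-up (the paper's $\dn$ is exactly your derivative written in the variable $j=Nv$). Part~(b) is handled identically in both.

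Parts~(a) and~(c), however, are genuinely different. The paper computes the second derivative $\xn=\partial\dn/\partial j$, factors it explicitly, and deduces that $\phi^{(N)}$ is unimodal on $(1,N/2)$ and monotone on $(N/2,N-1)$; this reduces each inequality to a comparison of two endpoint values, which is then settled via Dini's theorem applied to $[\phi^{(N)}]^{1/N}\to g$ (part~(a)) or to $\dn[Nz]/N\to\Lambda(z)$ (part~(c)). You instead bypass the second derivative entirely by splitting the range into a bulk region, where $\psi'$ is bounded away from zero so the $N\psi'$ term dominates for large $N$, and a boundary layer near $\xf$ or near $1$, which you handle by direct pointwise comparison. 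Your approach is more elementary (no Dini, no second derivative), and the bulk/boundary scheme is robust in that it would adapt easily to perturbations of $f$ or $g$; the paper's approach gives sharper structural information (the exact location of the turning point of $\phi^{(N)}$) and a somewhat cleaner endpoint comparison.

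Two small remarks. First, the irrationality of $\xf$ in part~(b) is unnecessary: even if $\tenxf/N=\xf$, the derivative at $v=\xf$ equals $1/(2\xf\vf)>0$, so $\Phi$ is still increasing there. Second, in part~(c) the unimodality-of-endpoints argument for the boundary layer is correct but heavier than needed; the paper's direct check that $\dn[\tenu]\to\Lambda(u)<0$ together with monotonicity of $\dn$ on $(N/2,N-1)$ disposes of the whole interval $[\tenu,N-1]$ at once.
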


\begin{proof}[Proof of Theorem~\ref{TA: GDV}]
To prove the assertion in part~\parte{a}, we consider $0 < u < v \leq \xf$.
Since $P(N u \leq \Vfim \leq N v) \geq P(\Vfim = \tenu)$, Theorem~\ref{TA: CAV} implies
\[ \limsup_{N \to \infty} -\frac{1}{N} \, \log P(N u \leq \Vfim \leq N v) \leq \lpv[u]. \]
On the other hand, from Lemmas~\ref{L: LSA} and~\ref{L: Est}, it follows that for all sufficiently large $N$,
\[ P(N u \leq \Vfim \leq N v) = \sum_{j = \tenu}^{\chnv} P(\Vfim = j) \leq C_0 \left( 1 + \frac{C_1}{N u} \right) N \, \fn[\tenu]. \]
Consequently,
\[ \liminf_{N \to \infty} -\frac{1}{N} \, \log P(N u \leq \Vfim \leq N v) \geq \lpv[u], \]
which yields the result in part~\parte{a}.
The remaining statements are proved in a similar way.
\end{proof}

\begin{proof}[Proof of Lemma~\ref{L: LSA}]
By Lemma~\ref{L: NS}, there exists a positive constant $K$ such that, for all sufficiently large $n$,
\[ \stir{2 n}{n} \leq \al \, \be^n \, n^{n - 1/2} \left(1 + \frac{K}{n}\right). \]
Therefore, using Theorem~\ref{T: PAV} and Lemmas~\ref{L: Stirling} and \ref{L: Aut}, we conclude that, for all sufficiently large $N$,
\begin{align*}
P(\Vfim = N) &= \frac{N!}{N^{2 N + 1}} \, d_N \leq \frac{\sqrt{2\pi} \, N^{N + 1/2} \, e^{-N} \, e^{1/12}}{N^{2 N + 1}} \, N \, \stir{2 N}{N} \\[0.1cm]
&\leq \frac{\sqrt{2\pi} \, e^{1/12} \, e^{-N}}{N^{N - 1/2}} \, \al \, \be^N \, N^{N - 1/2} \left(1 + \frac{K}{N}\right) \\[0.1cm]
&= \al \, \sqrt{2\pi} \, e^{1/12} \, B^N \left(1 + \frac{K}{N}\right).
\end{align*}
Analogously, if $u \in (0, 1)$, then for all sufficiently large $N$ and each $j = \tenu, \dots, N - 1$,
\begin{align*}
P(\Vfim = j) &= \frac{N!}{(N - j)!} \frac{d_j}{N^{2 j + 1}} \leq 
\frac{N^{N + 1/2} \, e^{-N} \, e^{1/12}}{(N - j)^{N - j + 1/2} \, e^{-(N - j)}} \, \frac{1}{N^{2 j + 1}} \, j \, \stir{2 j}{j} \\[0.1cm]
&\leq \frac{e^{1/12}}{N^{1/2}} \, \frac{j \, N^{N}}{e^{j} \, N^{2 j} \, (N - j)^{N - j + 1/2}} \; \al \, \be^j \, j^{j - 1/2} \left(1 + \frac{K}{N u}\right) \\[0.1cm]
&= \frac{\al \, e^{1/12}}{N^{1/2}} \, \frac{j^{1/2}}{(N - j)^{1/2}} \, \frac{B^j \, j^j \, N^{N}}{N^{2 j} \, (N - j)^{N - j}} \left(1 + \frac{K}{N u}\right) \\[0.1cm]
&= \frac{\al \, e^{1/12}}{N^{1/2}} \, \fn \left(1 + \frac{K}{N u}\right),
\end{align*}
whence the result follows.
\end{proof}

\begin{proof}[Proof of Lemma~\ref{L: Est}]
\parte{a} We consider $N \geq 10$ (so that $\chnxf > 1$ and $\tenvf < N - 1$).
To study the properties of the function $\fn$, we can treat~$j$ as a real number in the interval $[0, N - 1]$, so that the derivatives of $\phi^{(N)}$ with respect to~$j$ can be computed.
We note that, for $j \in (0, N - 1)$,
\[ \frac{\partial \, \fn}{\partial j} = \fn \, \dn, \]
where
\[ \dn = \frac{N}{2 \, j \, (N-j)} + 
\log \left[ \frac{j/N \, (1 - j/N)}{\vf \, (1 - \vf)} \right]. \]
Also, let $j_1 < j_2 < j_3$ be given by
\[ j_1 = \frac{N}{2} \left( 1 - \sqrt{1 - {2}/{N}} \right), \quad
j_2 = \frac{N}{2}, \quad
j_3 = \frac{N}{2} \left( 1 + \sqrt{1 - {2}/{N}} \right). \]
Then, for $j \in (0, N - 1)$,
\[ \xn = \frac{\partial \, \dn}{\partial j} = \frac{(N - 2 \, j) [2 \, j \, (N - j) - N]}{2 \, j^2 \, (N - j)^2}
= \frac{2 \, (j - j_1) (j - j_2) (j - j_3)}{j^2 \, (N - j)^2}. \]
Since $j_1 < 1$, we have that $\xn > 0$ for every $j \in (1, j_2)$.
In addition, for all sufficiently large $N$,
\begin{align*}
\dn[1] &= \frac{N}{2 \, (N - 1)} + 
\log \left[ \frac{1/N \, (1 - 1/N)}{\vf \, (1 - \vf)} \right] < 0, \quad \text{and} \\[0.1cm]
\dn[j_2] &= \frac{2}{N} + 
\log \left[ \frac{1}{4 \, \vf \, (1 - \vf)} \right] > 0.
\end{align*}
Now fix $u \in (0, \xf)$.
The assertion in part~\parte{a} is proved once we show that for all sufficiently large $N$,
\begin{equation}
\label{F: Cond-a}
\fn[\chnxf] \leq \fn[\tenu].
\end{equation}
To prove this, we define for $z \in (0, 1)$,
\begin{equation*}
\fmn = [f(z)]^{1/N} \, g(z) = \left(\frac{z}{1-z}\right)^{1/(2 N)} \, \exp\{ -\lpv[z] \}.
\end{equation*}
By Dini's Theorem, as $N \to \infty$, the sequence $\{ \Gamma^{(N)} \}$ converges to $g$ uniformly on each closed interval contained in the interval $(0, 1/2)$.
Consequently,
\[ \lim_{N \to \infty} [\fn[\chnxf]]^{1/N} = g(\xf) < g(u) = \lim_{N \to \infty} [\fn[\tenu]]^{1/N}. \]
This implies that~\eqref{F: Cond-a} holds true for all sufficiently large~$N$.

\bigskip
\noindent
\parte{b} Fixed $v \in (\xf, \vf)$, the statement in \parte{b} follows from the facts that the function~$\phi^{(N)}$ is continuous on the interval $[\tenxf, \chnv]$, and that
\[ \dn > \log \left[ \frac{j/N \, (1 - j/N)}{\vf \, (1 - \vf)} \right] > 0 \]
for every $j \in (\tenxf, \chnv)$.

\bigskip
\noindent
\parte{c} We first observe that for every $N \geq 2$,
\[ B < 1 \leq \frac{(N - 1)^{N - 1/2}}{N^{N - 2}}, \]
which implies that $\fn[N] \leq \fn[N - 1]$.
Now fixed $u \in (\vf, 1)$, we prove that for all sufficiently large $N$, the function $\phi^{(N)}$ is decreasing on the interval $[\tenu, N - 1]$.
As $j_3 > N - 1$, we conclude that $\xn < 0$ for every $j \in (j_2, N - 1)$.
Thus, it is enough to show that for all sufficiently large $N$,
\begin{equation}
\label{F: Cond-c}
\dn[\tenu] < 0.
\end{equation}
To accomplish this, we define for $z \in (0, 1)$,
\begin{align*}
\lan &= \frac{1}{2 N z \, (1 - z)} + 
\log \left[ \frac{z \, (1 - z)}{\vf \, (1 - \vf)} \right], \quad \text{and} \\[0.1cm]
\lal &= \log \left[ \frac{z \, (1 - z)}{\vf \, (1 - \vf)} \right].
\end{align*}
By Dini's Theorem, the sequence $\{ \Lambda^{(N)} \}$ converges to $\Lambda$ as $N \to \infty$, uniformly on each closed interval contained in the interval $(0, 1)$.
Hence,
\[ \lim_{N \to \infty} \dn[\tenu] = \lim_{N \to \infty} \Lambda^{(N)}\left(\frac{\tenu}{N}\right) = \lal[u] < 0. \]
From this, it follows that~\eqref{F: Cond-c} is valid for all sufficiently large $N$.
\end{proof}

\subsection{Proof of Theorem~\ref{T: PGD}}
\label{SS: Proof PGD}

For a set $K \subset [0, \infty)$, we denote by $\ic$ the infimum of $\ldx$ over $K$.
To prove part~\parte{a}, let $F$ be a nonempty closed subset of $[0, \infty)$.
If $\ic[F] = 0$ or $\ic[F] = \infty$, there is nothing to prove.
Assume that $0 < \ic[F] < \infty$, and define
\begin{equation}
\label{F: Pontos}
x_{1} = \sup \, (F \cap [0, \xf]), \:
x_{2} = \inf \, (F \cap [\xf, \vf]), \:
x_{3} = \sup \, (F \cap [\vf, \infty)).
\end{equation}
By the monotonicity properties of $H$, we have that $\ic[F] = \ldx[x_1] \, \wedge \, \ldx[x_2] \, \wedge \, \ldx[x_3]$ (we suppose that none of the intersections in~\eqref{F: Pontos} is empty; otherwise, the corresponding term is missing).
Using Theorem~\ref{TA: GDX}, we get
\begin{align*}
&\limsup_{N \to \infty} \frac{1}{N} \, \log \med(F) \leq 
\limsup_{N \to \infty} \frac{1}{N} \, \log \left( \med([0, x_1]) + \med([x_2, \vf]) + \med([\vf, x_3]) \right) \\[0.1cm]
&= \max \left\{ \limsup_{N \to \infty} \frac{1}{N} \, \log \med([0, x_1]),
\limsup_{N \to \infty} \frac{1}{N} \, \log \med([x_2, \vf]),
\limsup_{N \to \infty} \frac{1}{N} \, \log \med([\vf, x_3]) \right\} \\[0.2cm]
&= \max \left\{ -\ldx[x_1], -\ldx[x_2], -\ldx[x_3] \right\} = -\ic[F].
\end{align*}
This establishes part~\parte{a}.

Regarding part~\parte{b}, let $G$ be a nonempty open subset of $[0, \infty)$.
We will show that for each $x \in G$,
\begin{equation}
\label{F: LI}
\liminf_{N \to \infty} \frac{1}{N} \, \log \med(G) \geq -\ldx.
\end{equation}
As~\eqref{F: LI} trivially holds for $x \in \Gui$, it is enough to prove it for $x \in \Gzu$.

First, assume that $x \in \Gzx$.
Then, there exists $y < x$ such that $(y, x] \subset \Gzx$.
Using that $P(\Xfim \leq N x) = P(\Xfim \leq N y) + \med((y, x])$ and Theorem~\ref{TA: GDX}, we obtain
\[ -\ldx \leq \max \left\{ -\ldx[y], \, \liminf_{N \to \infty} \frac{1}{N} \, \log \med((y, x]) \right\}. \]
Since $\ldx < \ldx[y]$, it follows that~\eqref{F: LI} holds true.

Now if $x \in \Gxv$, then there exists $y < x$ such that $[y, x] \subset \Gxv$.
Consequently,
\[ \liminf_{N \to \infty} \frac{1}{N} \, \log \med(G) \geq
\liminf_{N \to \infty} \frac{1}{N} \, \log \med([y, x]) = -\ldx[y] \geq -\ldx. \]
A similar argument shows that~\eqref{F: LI} also holds for each $x \in \Gvu$.
Since $G$ is open and $H$ is continuous on $\xf$, we conclude that~\eqref{F: LI} is valid for every $x \in \Gzu$.
This completes the proof of part (b). \qed

\end{document}